\newtheorem{theo}{Theorem}[section]
\newtheorem{lemm}[theo]{Lemma}
\newtheorem{prop}[theo]{Proposition}
\newtheorem{rema}[theo]{Remark}
\author{I.D. Chipchakov}
\title{Henselian valued quasilocal fields with totally
indivisible value groups}
\date{}
\begin{document}
\maketitle

\begin{abstract}
This paper characterizes the quasilocal fields from the class of
Henselian valued fields with totally indivisible value groups, which
possess finite separable extensions of nontrivial defect. We show
that, for any prime number $q$, a divisible subgroup $T$ in the 
multiplicative group of complex roots of unity is realizable as the 
Brauer group of such a quasilocal field of residual characteristic 
$q$ unless $q = 2$ and the $2$-component of $T$ is trivial.
\end{abstract}

\vskip0.4truecm
{\it 2010 Mathematics Subject Classification:} 12J10, 16K50 
(Primary); 12F10

{\it Keywords:} Quasilocal field, Brauer group, Henselian valuation, 
immediate extension, norm-inertial/quasiinertial extension, totally 
indivisible value group

\vskip0.75truecm
\centerline{\bf Introduction}
\par
\vskip.57truecm
This paper is a continuation of \cite{Ch1}. Let $K$ be a field, $K 
^{\ast }$ its multiplicative group, $K _{\rm sep}$ a separable 
closure of $K$, $\mathcal{G}_{K} = \mathcal{G}(K _{\rm sep}/K)$ the 
absolute Galois group of $K$, and for any prime number $p$, let 
cd$_{p}(\mathcal{G}_{K})$ be the cohomological $p$-dimension of 
$\mathcal{G}_{K}$, $K(p)$ the maximal $p$-extension of $K$ in $K 
_{\rm sep}$, and $r(p)_{K}$ the rank of the Galois group 
$\mathcal{G}(K(p)/K)$ as a pro-$p$-group ($r(p)_{K} = 0$ in case 
$K(p) = K$). We say that $K$ is primarily quasilocal (abbr, PQL), if 
every cyclic extension $F$ of $K$ is embeddable as a subalgebra in 
each central division $K$-algebra $D$ of Schur index ind$(D)$ 
divisible by the degree $[F\colon K]$; $K$ is called quasilocal, if 
its finite extensions are PQL-fields. The class of quasilocal fields 
includes the one of local fields and contains $p$-adically closed 
fields and Henselian discrete valued fields with quasifinite residue 
fields (cf. \cite{S2}, Ch. XIII, Sect. 3, \cite{PR}, Theorem~3.1 and 
Lemma~2.9, and \cite{Ch3}, Proposition~6.4). The quasilocal property 
has been fully characterized by \cite{Ch1}, Theorem~2.1, in the 
class of Henselian (valued) fields with totally indivisible value 
groups, whose finite separable extensions are defectless. Other 
examples of quasilocal fields, mostly, of nonarithmetic nature (from 
the perspective of \cite{Ch3}, (1.2), (1.3) and Corollary~5.3), can 
be found in \cite{Ch5}.
\par
\medskip
The present paper proves the existence of quasilocal Henselian fields 
with totally indivisible value groups, that admit defectful finite 
separable extensions. It describes, up-to an isomorphism, the abelian 
torsion groups that can be realized as Brauer groups of such fields.
\par
\medskip
\section{\bf Statement of the main result}
\par
\medskip
A nontrivial (Krull) valuation $v$ of a field $K$ is said to be 
Henselian, if it is uniquely, up-to an equivalence, extendable to a 
valuation $v _{L}$ on each algebraic field extension $L/K$. This is 
the case if and only if the valuation ring $O _{v}(K) = \{a \in 
K\colon \ v(a) \ge 0\}$ is Henselian with respect to its (unique) 
maximal ideal $M _{v}(K) = \{a \in K\colon \ v(a) > 0\}$ (see 
(2.1)). Denote by $v(K)$ the value group and by $\widehat K$ the 
residue field of $(K, v)$. We say that $v(K)$ is totally indivisible, 
if it is $p$-indivisible, i.e. $v(K) \neq pv(K)$, for every $p \in 
\mathbb P$. As a beginning of our considerations, we introduce the 
notions of a norm-inertial extension and of a quasiinertial 
extension, as follows:
\par
\medskip
{\bf Definitions.} Let $(K, v)$ be a Henselian field with 
char$(\widehat K) = p > 0$, $M$ a finite extension of $K$ in $K(p)$, 
and $v _{M}$ a valuation of $M$ extending $v$. The extension $M/K$ is 
called norm-inertial, if the norm group $N(M/K)$ contains all $\theta 
\in K ^{\ast }$ with $v(\theta - 1) > 0$. We say that $M/K$ is 
quasiinertial, if the valuation ring $O _{v _{M}}(M)$ consists of 
those elements $\delta \in M ^{\ast }$, for which the trace Tr$_{K} 
^{M}(\delta \mu )$ has value $\ge 0$, for each $\mu \in O _{v}(M)$.
\par
\medskip
Our next result and \cite{Ch1}, Theorem~2.1, give a formally complete 
characterization of quasilocal Henselian fields with totally 
indivisible value groups, and attract interest in the algebraic 
nature of immediate norm-inertial extensions:
\par
\medskip
\begin{prop}
\label{prop1.1}
Let $(K, v)$ be a Henselian field admitting a finite extension in $K
_{\rm sep}$ of nontrivial defect, and for each prime $p$, let $G
_{p}$ be a Sylow pro-$p$-subgroup of $\mathcal{G}_{K}$ and $K _{p}$
the fixed field of $G _{p}$. Suppose that {\rm char}$(\widehat K) =
q$ and $v(K) \neq pv(K)$ whenever $G _{p} \neq \{1\}$. Then $K$ is
quasilocal if and only if it satisfies the following:
\par
{\rm (a)} The quotient group $v(K)/qv(K)$ is of order $q$, $\widehat
K$ is perfect, {\rm cd}$_{q}(\mathcal{G}_{\widehat K}) = 0$, and $K
_{q}$ has an immediate $\mathbb Z _{q}$-extension $Y$ in $K _{\rm
sep}$, such that every finite extension $L _{q}$ of $K _{q}$ in $K
_{\rm sep}$ with $L _{q} \cap Y = K _{q}$ is totally ramified; in
addition, finite extensions of $K _{q}$ in $Y$ are norm-inertial;
\par
{\rm (b)} $r(p)_{K _{p}} \le 2$, for each prime $p \neq q$.
\end{prop}

\medskip
Proposition \ref{prop1.1} has been proved as \cite{Ch3}, 
Proposition~6.1. The main result of the present paper (stated 
without proof in \cite{Ch3}, Sect. 6) provides series of examples of 
quasilocal Henselian real-valued fields satisfying the conditions of 
this proposition. Before stating it, note that the assumptions of 
Proposition \ref{prop1.1} ensure that $K$ is a nonreal field 
\cite{La}, Theorem~3.16, which implies that the Brauer group Br$(K)$ 
is divisible whenever $K$ is quasilocal (cf. \cite{Ch2}, 
Theorem~3.1). At the same time, in the quasilocal case, by 
\cite{Ch3}, Theorem~1.1, Br$(K)$ is embeddable as a subgroup in the 
quotient group $\mathbb Q/\mathbb Z$ of the additive group of 
rational numbers by the subgroup of integers. Conversely, divisible 
subgroups of $\mathbb Q/\mathbb Z$ are realizable as Brauer groups 
of quasilocal Henselian fields of the type studied in \cite{Ch1} 
(see (2.6) and \cite{TY}, Proposition~2.2). These observations 
attract interest in the description of the isomorphism classes of 
Brauer groups of the quasilocal Henselian fields admissible by 
Proposition \ref{prop1.1}. Our main result in this direction is 
contained in the following theorem:
\par
\medskip
\begin{theo}
\label{theo1.2}
Let $(\Phi , \omega )$ be a Henselian discrete valued field with
$\widehat \Phi $ quasifinite and {\rm char}$(\widehat K) = q \neq 
0$, and let $T$ be a divisible subgroup of $\mathbb Q/\mathbb Z$ 
with a nontrivial $q$-component $T _{q}$. Then there is a quasilocal
Henselian field $(K, v)$ such that:
\par
{\rm (a)} {\rm Br}$(K)$ is isomorphic to $T$, $K/\Phi $ is a field
extension of transcendency degree $1$ and $v$ is a prolongation of
$\omega $;
\par
{\rm (b)} $v(K)$ is a totally indivisible Archimedean group, 
$\widehat K/\widehat \Phi $ is an algebraic extension, and $K$
possesses an immediate quasiinertial $\mathbb Z _{q}$-extension $I 
_{\infty }$.
\end{theo}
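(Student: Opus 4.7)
The plan is to construct $K$ inside a separable closure of a rational function field over $\Phi$, arranging matters so that Theorem~1.1 applies. I would proceed in three stages.

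First, I would build an ambient Henselian base. Pick a transcendental $t$ over $\Phi$ and extend $\omega$ to a valuation $v_0$ on $\Phi(t)$ by assigning $v_0(t)$ to be a positive real number rationally independent of $\omega(\Phi)$. This yields an Archimedean, totally indivisible value group $\Gamma_0 := \omega(\Phi) + \mathbb{Z}v_0(t) \subset \mathbb{R}$. Let $(E_0, v_0)$ denote the Henselization of $(\Phi(t), v_0)$; its residue field then coincides with $\widehat{\Phi}$, which is quasifinite of characteristic $q$.

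Second, I would produce the required immediate $\mathbb{Z}_q$-extension. Inside $(E_0)_{\rm sep}$ I would build a tower $E_0 \subset E_1 \subset E_2 \subset \cdots$ of cyclic degree-$q$ extensions, each of defect $q$, using Artin--Schreier generators $y_n^q - y_n = f_n$ in the equal-characteristic case or suitably chosen Kummer generators when ${\rm char}(\Phi) = 0$. By forcing the values $v_0(f_n)$ to decrease sufficiently fast along a carefully prescribed sequence in $\Gamma_0$, one guarantees that no new value-group elements or residue-field elements appear at any stage. The union $I_\infty := \bigcup_n E_n$ then provides an immediate $\mathbb{Z}_q$-extension of $E_0$ and, in particular, witnesses the nontrivial defect needed for Theorem~1.1 to apply.

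Third, I would cut down to realise ${\rm Br}(K) \cong T$ while enforcing quasilocality. I would pick a closed subgroup $H \le \mathcal{G}_{E_0 I_\infty}$ with carefully prescribed Sylow pro-$p$-components so that the fixed field $K := ((E_0)_{\rm sep})^H$ satisfies: (a) $I_\infty \subseteq K$, so the defect produced in stage two persists; (b) the $p$-component of ${\rm Br}(K)$ agrees with the $p$-component of $T$ for every prime $p$, controlled via the Witt-type decomposition of the Brauer group of a Henselian valued field together with the Merkurjev--Suslin description of its $p$-torsion; and (c) for $p \neq q$ the field $K_p$ satisfies $r(p)_{K_p} \le 2$, while for $p = q$ the extension $Y := K_q I_\infty$ is quasiinertial and realises the ``intersection gives total ramification'' property demanded in Theorem~1.1(i).

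The main obstacle will be the simultaneous coordination in this third step: one must control the Galois-theoretic picture finely enough to force ${\rm Br}(K)$ to equal $T$ exactly, neither larger nor smaller, while keeping the cohomological $p$-ranks, the defect pattern, and the quasiinertial property all within the ranges required by Theorem~1.1. Once these compatibilities are achieved, quasilocality of $K$ follows at once from Theorem~1.1, and the remaining assertions (Archimedean, totally indivisible value group; algebraic residue extension; transcendence degree one over $\Phi$) fall out directly from the choice of $v_0$ and the construction of $I_\infty$.
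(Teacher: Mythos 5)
There is a genuine gap, and it begins already with the order of operations in your first two stages. You Henselize $\Phi(t)$ with $v_0(t)$ rationally independent of $\omega(\Phi)$ and then try to erect a tower of defect-$q$ Artin--Schreier (or Kummer) extensions over the resulting field $E_0$. But $(\Phi,\omega)$ is Henselian and discrete with perfect residue field, so its finite separable extensions are defectless, and the extension $(\Phi(t),v_0)/(\Phi,\omega)$ is value-transcendental without transcendence defect; by the stability theory of valued function fields the Henselization $E_0$ is again separably defectless, so no immediate separable extension of $E_0$ of degree $q$ exists and your tower cannot start. Moreover $v_0(E_0)=\omega(\Phi)+\mathbb Z v_0(t)\cong\mathbb Z^{2}$, so $v_0(E_0)/qv_0(E_0)$ has order $q^{2}$, which is incompatible both with condition (i) of Theorem 1.1 and with the Brauer-group computation you would eventually need. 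The paper avoids both problems by reversing the order: it first constructs, by successive Zorn-type maximality arguments inside $\Phi_{\rm sep}$ and the projectivity of $\prod_{p\in S(T)}\mathbb Z_{p}$, an \emph{algebraic} extension $\widetilde K$ of $\Phi$ with $\tilde v(\widetilde K)=\mathbb Q$ and $\mathcal G_{\widetilde K}\cong\prod_{p\in S(T)}\mathbb Z_{p}$; over such a field the $\mathbb Z_{q}$-extension $\Gamma\widetilde K/\widetilde K$ is forced to be immediate (there is no room left in the value group or residue field), and only \emph{then} is the indeterminate $X$ adjoined with irrational value $\gamma$ and the Henselization $K$ taken, so that $v(K)/pv(K)$ has order $p$ for every $p$ and the immediacy of $\Gamma K/K$ persists.

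Your third stage is where essentially all of the content of the theorem lives, and ``pick a closed subgroup $H$ with carefully prescribed Sylow pro-$p$-components'' is a statement of the goal rather than a construction. The existence of a fixed field with exactly the prescribed Brauer group is what must be proved; the paper's mechanism is to arrange $\mathcal G(\overline\Phi K/K)\cong\prod_{p\in S(T)}\mathbb Z_{p}$ and then use Tsen's theorem (since $\overline\Phi K$ has transcendence degree $1$ over an algebraically closed field) to get ${\rm Br}(K)={\rm Br}(\overline\Phi K/K)$, after which the $p\neq q$ components are handled by a Demushkin-group argument giving $r(p)_{K_p}=2$ and ${\rm Br}(K_p)\cong\mathbb Z(p^{\infty})$, and the $p=q$ component by explicit norm-group computations for $\Gamma_{1}K_{q}/K_{q}$ and its competitors $\Lambda$. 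Note in particular that the Witt-type decomposition and Merkurjev--Suslin input you invoke control only the tame part of the Brauer group; the wild part at $p=q$ is exactly the hard case and is where the quasiinertial/norm-inertial machinery of Section 2 is indispensable. Finally, even if Theorem 1.1 were applicable (verifying its hypothesis (i) is itself most of the work), it would only yield quasilocality, not the isomorphism ${\rm Br}(K)\cong T$, so a direct computation of the Brauer group cannot be avoided.
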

\par
\medskip
Theorem \ref{theo1.2} is proved in Section 4. Its proof relies on
the characterization of quasiinertial Galois extensions given in
Section 2, and on their relations with norm-inertial Galois 
extensions (these results, in a special case, also play a role in 
the proof of Proposition \ref{prop1.1}, see (2.8), Lemma 
\ref{lemm2.1} and \cite{Ch2}, (3.4)). In addition, we use the 
quasilocal property of $\Phi $ and an easily applicable criterion 
for the fulfillment of the quasiinertial condition, presented in 
Section 3. In Section 5 we obtain similarly that if $q > 2$, then 
divisible subgroups $T \le \mathbb Q/\mathbb Z$ with $T _{q} = 
\{0\}$ are also realizable as Brauer groups of quasilocal fields 
admissible by Proposition \ref{prop1.1}. The case of $q = 2$ is 
exceptional - then Br$(K) _{2}$ is a quasicyclic $2$-group whenever 
$K$ is a quasilocal field satisfying the conditions of Proposition 
\ref{prop1.1} (see Proposition \ref{prop5.1}).
\par
Note that Brauer groups of quasilocal fields $E$ have influence on a
wide spectrum of their algebraic properties. This includes the
structure of the continuous character groups of
$\mathcal{G}(E(p)/E)$, $p \in \mathbb P$ \cite{Ch2}, II, Lemmas~2.3
and 3.3, cohomological properties of $\mathcal{G}(E(p)/E)$ and the
Sylow pro-$p$-subgroups of $\mathcal{G}_{E}$ \cite{Ch2}, I,
Theorem~8.1, and \cite{Ch4}, Sect. 5, finite abelian extensions of
$E$ and their norm groups \cite{Ch4} (concerning nonabelian Galois
extensions of $E$, see \cite{Ch5}). Therefore, the description of
Br$(E)$ is a major objective of the study of $E$, and the present
research can be viewed as the final step towards a really complete
characterization of quasilocal Henselian fields with totally
indivisible value groups.
\par
\medskip
The basic notation, terminology and conventions kept in this paper
are standard and essentially the same as in \cite{Ch2}, I,
\cite{Ch3} and \cite{Ch4}. Preliminaries on Henselian valuations
used in the sequel are included in Section 2. Throughout, Brauer and
value groups are additively presented, Galois groups are viewed as
profinite with respect to the Krull topology, and by a profinite
group homomorphism, we mean a continuous one. We write $\mathbb P$
for the set of prime numbers, and for each $p \in \mathbb P$,
$\mathbb Z _{p}$ denotes the additive group of $p$-adic integers and
$\mathbb Z(p ^{\infty })$ is the quasicyclic $p$-group. For any
profinite group $G$, cd$(G)$ is the cohomological dimension of $G$,
and cd$_{p}(G)$, $p \in \mathbb P$, are its cohomological
$p$-dimensions. Given a field $E$, Br$(E) _{p}$ is the $p$-component
of the Brauer group Br$(E)$, and $_{p} {\rm Br}(E) = \{\delta \in
{\rm Br}(E)\colon \ p\delta = 0\}$, where $p \in \mathbb P$, $P(E) =
\{p \in \mathbb P\colon \ E(p) \neq E\}$, and $\Pi (E) = \{p \in
\mathbb P\colon \ {\rm cd}_{p}(\mathcal{G}_{E}) > 0\}$. We write
$s(E)$ for the class of finite-dimensional central simple
$E$-algebras, $d(E)$ stands for the class of division algebras $D
\in s(E)$, and for each $A \in s(E)$, $[A]$ is the similarity class
of $A$ in Br$(E)$. For any field extension $E ^{\prime }/E$, $I(E
^{\prime }/E)$ denotes the set of its intermediate fields. By a
$\mathbb Z _{p}$-extension of $E$, for some $p \in \mathbb P$, we
mean a Galois extension $E _{\infty }/E$ with a Galois group
$\mathcal{G}(E _{\infty }/E) \cong \mathbb Z _{p}$. The field $E$ is
called $p$-quasilocal, if Br$(E) _{p} = \{0\}$, or $p \notin P(E)$,
or every degree $p$ extension of $E$ in $E(p)$ embeds as an
$E$-subalgebra in each $\Delta _{p} \in d(E)$ of index $p$. Note
that $E$ is PQL if and only if it is $p$-quasilocal, for each $p \in
P(E)$ (cf. \cite{P}, Sects. 13.4, 14.4 and 15.3).
\par
\medskip

\section{Preliminaries on Henselian valuations and characterizations
of quasiinertial Galois extensions}
\par
\medskip
Let $(K, v)$ be a (nontrivially) valued field, $K _{v}$ a completion 
of $K$ relative to the topology induced by $v$, $v(K) _{0} = 
\{\gamma \in v(K)\colon \ \gamma > 0\}$ and $\nabla _{\gamma }(K) = 
\{\alpha \in K\colon \ v(\alpha - 1) \ge \gamma \}$, for each 
$\gamma \in v(K) _{0} \cup \{0\}$. It is known that $v$ is Henselian 
if and only if the following condition holds (cf. \cite{E3}, Sect. 
18.1):
\par
\medskip
(2.1) Given a polynomial $f(X) \in O _{v}(K) [X]$, and an element $a
\in O _{v}(K)$, such that $2v(f ^{\prime }(a)) < v(f(a))$, where $f
^{\prime }$ is the (formal) derivative of $f$, there is a zero $c
\in O _{v}(K)$ of $f$ satisfying the equality $v(c - a) = v(f(a)/f
^{\prime }(a))$.
\par
\medskip
The fulfillment of (2.1) ensures that the polynomial $f _{b}(X) =
f(X) + b$ has a zero in $K$ whenever $b \in K ^{\ast }$ and $v(b) >
2v(f ^{\prime }(a))$. Also, the Henselity of $v$ is inherited by $v
_{M}$, for every algebraic field extension $M/K$. When $[M\colon K]$
is finite and $M \subseteq K _{\rm sep}$, these observations,
applied to the minimal polynomial $f _{\beta }(X)$ over $K$ of a
primitive element $\beta \in O _{v _{M}}(M)$ of $M/K$, prove the
following:
\par
\medskip
(2.2) The norm group $N(M/K)$ contains every element $\alpha \in O 
_{v}(K)$, for which $v(\alpha - 1) > 2v _{M}(f _{\beta } ^{\prime 
}(\beta ))$.

\medskip
When $v$ is Henselian and $L/K$ is algebraic, $v _{L}$ is Henselian
and extends uniquely to a valuation $v _{D}$ on each $D \in d(L)$.
Denote by $\widehat D$ the residue field of $(D, v _{D})$ and put
$v(D) = v _{D}(D)$. By the Ostrowski-Draxl theorem \cite{Dr2},
$[D\colon K]$, $[\widehat D\colon \widehat K]$ and the ramification
index $e(D/K)$ are related as follows:
\par
\medskip
(2.3) $[D\colon K]$ is divisible by $[\widehat D\colon \widehat
K]e(D/K)$ and the defect
\par\noindent
$d(D/K) = [D\colon K]/([\widehat D\colon \widehat K]e(D/K))$ is not
divisible by any $p \in \mathbb P$, $p \neq {\rm char}(\widehat K)$.
\par
\medskip
The $K$-algebra $D$ is said to be defectless, if $d(D/K) = 1$, i.e.
$[D\colon K] = [\widehat D\colon \widehat K]e(D/K)$; it is called
immediate, if $\widehat D = \widehat K$ and $e(D/K) = 1$. We say
that $D/K$ is totally ramified, if $e(D/K) = [D\colon K]$. When
$v(K) \neq pv(K)$, for a given $p \in \mathbb P$, $(K, v)$ is
subject to the following alternative (see \cite{Ch6},
Corollary~6.5):
\par
\medskip
(2.4) (i) $K$ has a totally ramified proper extension in $K(p)$;
\par
(ii) char$(K) = 0$, $K$ does not contain a primitive $p$-th root of
unity and the minimal isolated subgroup of $v(K)$ containing $v(p)$
is $p$-divisible.
\par
\medskip
A finite extension $R$ of $K$ is said to be inertial, if $[R\colon
K] = [\widehat R\colon \widehat K]$ and $\widehat R$ is separable
over $\widehat K$; $R/K$ is called tamely ramified, if $\widehat
R/\widehat K$ is separable and $e(R/K)$ is not divisible by
char$(\widehat K)$. It is well-known that the compositum $K _{\rm
ur}$ of inertial extensions of $K$ in $K _{\rm sep}$ is a Galois
extension of $K$, and so is the compositum $K _{\rm tr}$ of tamely
ramified extensions of $K$ in $K _{\rm sep}$. Note also that $K
_{\rm ur}$ and $K _{\rm tr}$ have the following properties:
\par
\medskip
(2.5) (i) $v(K _{\rm ur}) = v(K)$ and finite extensions of $K$ in $K
_{\rm ur}$ are inertial;
\par
(ii) $K _{\rm tr}$ contains a primitive $m$-th root of unity, for 
each $m \in \mathbb N$ not divisible by char$(\widehat K)$, finite 
extensions of $K$ in $K _{\rm tr}$ are tamely ramified, and $v(K 
_{\rm tr}) = pv(K _{\rm tr})$, for every $p \in \mathbb P$ different 
from char$(\widehat K)$;
\par
(iii) $\widehat K _{\rm ur}$ is $\widehat K$-isomorphic to $\widehat
K _{\rm sep}$, $\mathcal{G}(K _{\rm ur}/K) \cong
\mathcal{G}_{\widehat K}$, and the natural mapping of $I(K _{\rm
ur}/K)$ into $I(\widehat K _{\rm sep}/\widehat K)$ is bijective.
\par
\medskip
When $(K, v)$ is a local field, $T$ is a divisible subgroup of
$\mathbb Q/\mathbb Z$, and $S(T) = \{p \in \mathbb P\colon \ T _{p}
\neq \{0\}\}$, there exists $K _{T} \in I(K _{\rm ur}/K)$, such that
$\mathcal{G}(K _{\rm ur}/K _{T})$ is isomorphic to the topological
group product $\prod _{p \in S(T)} \mathbb Z _{p}$. In other words,
$T$ is isomorphic to the continuous character group of $\mathcal{G}(K
_{\rm ur}/K _{T})$. Since $v(K _{T}) = v(K)$ and Br$(\widehat K _{T})
= \{0\}$, this enables one to deduce from Witt's theorem (cf.
\cite{Wad}, (3.10)) that Br$(K _{T}) \cong T$. It is therefore clear
from \cite{Ch3}, Corollary~5.3, that
\par
\medskip
(2.6) An abelian torsion group is realizable as the Brauer group of
a quasilocal Henselian field with a totally indivisible value group
and defectless finite separable extensions if and only if it is
divisible and embeddable in $\mathbb Q/\mathbb Z$.
\par
\medskip
Let now $(K, v)$ be a Henselian field with char$(\widehat K) = p
> 0$, and let $M \in I(K(p)/K)$ be a finite extension of $K$. Then:
\par
\medskip
(2.7) $\nabla _{0}(M)$ equals the pre-image of $\nabla _{0}(K)$,
under the norm map $N _{K}^{M}$, provided that $\widehat M =
\widehat K$; in this case, $\varphi (\mu )\mu ^{-1} \in \nabla
_{0}(M)$ whenever $\mu \in M ^{\ast }$ and $\varphi $ is a
$K$-automorphism of $M$.
\par
\medskip
With notation being as above, put $\delta _{M/K}(\mu ) = v _{M}(f
_{\mu }^{\prime }(\mu ))$, for each primitive element $\mu $ of
$M/K$, where $f _{\mu }^{\prime }$ is the derivative of the minimal
(monic) polynomial $f _{\mu }$ of $\mu $ over $K$. Clearly,
$[M\colon K]\delta _{M/K}(\mu ) = v(d _{\mu })$, $d _{\mu }$ being
the discriminant of $f _{\mu }$. This fact and the the following 
lemma will be used in the sequel.
\par
\medskip
\begin{lemm}
\label{lemm2.1}
Let $(K, v)$ be a Henselian field with {\rm char}$(\widehat K) = p >
0$, $M$ a finite Galois extension of $K$ in $K(p)$, and for each
primitive element $\mu $ of $M/K$ lying in $O _{v}(M)$, let $f _{\mu
}(X)$ be the minimal polynomial of $\mu $ over $K$, and $\delta
_{M/K}(\mu ) = v _{M}(f _{\mu } ^{\prime }(\mu ))$. Then $M/K$ is
quasiinertial if and only if any of the following three equivalent
conditions is fulfilled:
\par
{\rm (a)} For each $\gamma \in v(K) _{0}$, there exists $\lambda
_{\gamma } \in O _{v}(K)$ with $v({\rm Tr}_{K} ^{M} (\lambda
_{\gamma })) < \gamma $;
\par
{\rm (b)} For each $\gamma ^{\prime } \in v(M) _{0}$, $O _{v}(M)$
contains a primitive element $\mu _{\gamma '}$ of $M/K$ satisfying
the inequality $\delta _{M/K}(\mu _{\gamma '}) < \gamma ^{\prime }$;
\par
{\rm (c)} There exists $L \in I(M/K)$, such that $L/K$ and $M/L$ are
quasiinertial;
\par
{\rm (d)} For any $\gamma \in v(K) _{0}$, there is $\beta _{\gamma }
\in O _{v}(M)$, such that $v _{M}(\varphi (\beta _{\gamma }) - \beta 
_{\gamma }) < \gamma $, for every $\varphi \in \mathcal{G}(M/K)$ 
different from $1$.
\par\noindent
When $M/K$ is quasiinertial, so are $M/M _{0}$ and $M _{0}/K$, for
every $M _{0} \in I(M/K)$.
\end{lemm}

\medskip
\begin{proof} The concluding assertion of the lemma follows from the 
claimed equivalence of condition (a) and the one that $M/K$ is 
quasiinertial, together with the inequalities $v _{M}(y) \le v({\rm 
Tr}_{K}^{M}(y))$, $y \in O _{v}(M)$, and the transitivity of traces 
in towers of finite separable extensions (cf. \cite{L}, Ch. VIII, 
Sect. 5). When condition (c) of Lemma \ref{lemm2.1} holds, the 
assertion that $M/K$ is quasiinertial is standardly proved by 
assuming the opposite, using again trace transitivity (specifically, 
the equality Tr$_{K}^{M} = {\rm Tr}_{K}^{L} \circ {\rm Tr}_{L}^{M}$) 
and the $L$-linearity of Tr$_{L}^{M}$. Thus (c) turns out to be 
equivalent to the assumption that $M/K$ is quasiinertial. Let $r \in 
O _{v}(M)$ be a primitive element of $M/K$. It is easily obtained 
(by applying basic linear algebra, including Cramer's rule and 
Vandermonde's determinant) that if $r ^{\prime } \in O _{v}(M) 
\setminus \{0\}$ and Tr$_{K} ^{M}(r ^{\prime -1}r ^{j-1}) \in O 
_{v}(K)$, $j = 1, \dots , [M\colon K]$, then $2v _{M}(r ^{\prime }) 
\le v(d _{r})$. Hence, the validity of condition (b) of Lemma 
\ref{lemm2.1} ensures that $M/K$ is quasiinertial. As to condition 
(a), it is satisfied in case $M/K$ is quasiinertial (because if $a 
\in M _{v}(K) \setminus \{0\}$ and $a ^{\prime } \in O _{v}(M)$, 
then Tr$_{K}^{M}(a ^{-1}a ^{\prime }) \in O _{v}(K)$ if and only if 
$v(a) \le v({\rm Tr}_{K} ^{M}(a ^{\prime }))$). These observations 
can be summarized by saying that (b)$\to $(c)$\to $(a). We prove 
that (a)$\to $(d). Note here that if $v(K)_{0}$ contains a minimal 
element, then $M/K$ is inertial if and only if some of conditions 
(a), (b), (c), (d) is satisfied. Therefore, it suffices to prove 
that (a)$\to $(d)$\to $(b) in case $v(K)_{0}$ does not contain a 
minimal element. We first prove that (a)$\to $(d). Assume that 
condition (a) holds, $[M\colon K] = p ^{n}$ and $\alpha $ is an 
element of $O _{v}(M)$, such that $v({\rm Tr}_{K}^{M}(\alpha )) < 
v(p)$. It is easily verified that $\alpha $ is a primitive element 
of $M/K$. Let $\alpha _{u}$, $u = 1, \dots , [M\colon K]$, be the 
roots in $M$ of the minimal polynomial $f _{\alpha }$ of $\alpha $ 
over $K$. We prove the validity of condition (d) by showing that $v 
_{M'}(\alpha _{u'} - \alpha _{u''}) \le v({\rm Tr}_{K} ^{M}(\alpha 
))$, for $1 \le u ^{\prime } < u ^{\prime \prime } \le p ^{n}$. 
Suppose first that $[M\colon K] = p$ and $\varphi $ is a generator 
of $\mathcal{G}(M/K)$. Then $v _{M}(\varphi ^{\nu }(\alpha ) - 
\alpha ) = v _{M}(\varphi (\alpha ) - \alpha )$, for $\nu = 1, \dots 
, p - 1$. As $\alpha \in O _{v}(M)$ and $v({\rm Tr}_{K} ^{M}(\alpha 
)) < v(p)$, this implies the stated inequality. The proof in general 
is carried out by induction on $n$, under the inductive hypothesis 
that $n \ge 2$, and for some $K ^{\prime } \in I(M/K)$ of degree $[K 
^{\prime }\colon K] = p$, Tr$_{K'}^{M}$ is subject to analogous 
inequalities. Since Tr$_{K} ^{M}(\alpha ) = {\rm Tr}_{K} ^{K'}({\rm 
Tr}_{K'} ^{M}(\alpha ))$, whence $v _{K'}({\rm Tr}_{K'} ^{M}(\alpha 
)) \le v({\rm Tr}_{K} ^{M}(\alpha )) < v(p)$, this yields $v 
_{M'}(\alpha _{u'} - \alpha _{u''}) \le v _{K'}({\rm Tr}_{K'} 
^{M}(\alpha ))$, provided that $u ^{\prime } \neq u ^{\prime \prime 
}$ and $\alpha _{u'}$, $\alpha _{u''}$ are conjugate over $K 
^{\prime }$. Now take indices $u ^{\prime }$ and $u ^{\prime \prime 
}$ so that $\alpha _{u'}$ and $\alpha _{u''}$ be non-conjugate over 
$K ^{\prime }$. Then $\alpha _{u''} = \psi (\alpha _{u'})$, for some 
$\psi \in \mathcal{G}(M/K)$ inducing on $K ^{\prime }$ a generator, 
say, $\psi ^{\prime }$ of $\mathcal{G}(K ^{\prime }/K)$. Denote by 
$S _{u'}$ and $S _{u''}$ the sets of roots in $M$ of the minimal 
polynomials over $K ^{\prime }$ of $\alpha _{u'}$ and $\alpha 
_{u''}$, respectively. Using the normality of $\mathcal{G}(M/K 
^{\prime })$ in $\mathcal{G}(M/K)$, one obtains that if $v 
_{M}(\alpha _{u'} - \alpha _{u''}) > v({\rm Tr}_{K} ^{M}(\alpha ))$, 
then there is a bijection $\epsilon \colon S _{u'} \to S _{u''}$, 
such that $v _{M}(\alpha _{u} - \epsilon (\alpha _{u})) > v({\rm 
Tr}_{K} ^{M}(\alpha ))$ whenever $\alpha _{u} \in S _{u'}$. Our 
conclusion, however, contradicts the inequality $v _{K'}(\psi 
^{\prime }({\rm Tr}_{K'} ^{M}(\alpha )) - {\rm Tr}_{K'} ^{M}(\alpha 
)) \le v({\rm Tr}_{K} ^{M}(\alpha ))$ and thereby proves that $v 
_{M}(\alpha _{u'} - \alpha _{u''}) \le v({\rm Tr}_{K} ^{M}(\alpha 
))$. Thus the implication (a)$\to $(d) becomes obvious, and since 
(b)$\to $(c)$\to $(a), it remains to be seen that (d)$\to $(b). The 
assertion is evident, if the intersection $V$ of the nontrivial 
isolated subgroups of $v(K)$ is trivial. Suppose now that $V \neq 
\{0\}$. This means that $V$ is a minimal isolated subgroup of 
$v(K)$. Hence, $V$ is Archimedean, and by H\"{o}elder's theorem (cf. 
\cite{E3}, Theorem~2.5.2), it is isomorphic to an ordered subgroup 
of the additive group $\mathbb R$ of real numbers. Identifying $V$ 
with its isomorphic copy in $\mathbb R$, and taking into account 
that $v(K) _{0}$ does not contain a minimal element, one concludes 
that, for each $h \in V \cap v(K) _{0}$, there exist $h _{m} \in V 
\cap v(K) _{0}$, $m \in \mathbb N$, such that $mh _{m} < h$, for 
each index $m$. This observation completes the proof of implication 
(d)$\to $(b), and of Lemma \ref{lemm2.1}.
\end{proof}
\par
\medskip\noindent
Assuming again that $(K, v)$ is a Henselian field with char$(\widehat 
K) = p \neq 0$, we say that a field $I _{\infty } \in I(K(p)/K)$ is 
said to be a norm-inertial extension of $K$, if finite extensions of 
$K$ in $I _{\infty }$ are norm-inertial. The extension $I _{\infty 
}/K$ is called quasiinertial, if so are finite extensions of $K$ in 
$I _{\infty }$. When $I _{\infty }/K$ is Galois, we show that the 
two notions are related as follows (see \cite{Ch2}, (3.4), for a 
very concise proof in the special case where $I _{\infty }/K$ is a 
$\mathbb Z _{p}$-extension):
\par
\medskip
(2.8) (i) $I _{\infty }/K$ is norm-inertial, provided that it is
quasiinertial;
\par
(ii) If $I _{\infty }/K$ is an immediate norm-inertial extension and
$H \neq pH$ whenever $H \neq \{0\}$ and $H$ is an isolated subgroup 
of $v(K)$, then $I _{\infty }/K$ is quasiinertial; when this holds, 
$I _{\infty }/I$ is quasiinertial, for every $I \in I(I _{\infty 
}/K)$.
\par
\medskip\noindent
Inertial extensions of $K$ in $K(p)$ are obviously norm-inertial, so 
it is sufficient to prove (2.8) under the extra hypothesis that $v(K) 
_{0}$ does not contain a minimal element. Since quasiinertial finite 
Galois extensions of $K$ satisfy condition (b) of Lemma 
\ref{lemm2.1}, this enables one to deduce (2.8) (i) from (2.2) (by 
the method of proving implication (d)$\to $(b) of the lemma). The 
latter assertion of (2.8) (ii) is implied by the former one and 
Lemma \ref{lemm2.1}. We turn to the proof of the former part of 
(2.8) (ii). We first show that, for each $\gamma \in v(K) _{0}$, 
there exists $\gamma ^{\prime } \in v(K) _{0}$ less than $\gamma $ 
and not lying in $pv(K)$. The assertion is obvious, if the subgroup 
$V \le v(K)$ defined in the proof of Lemma \ref{lemm2.1} is trivial, 
so we assume that $V \neq \{0\}$. This ensures that $V$ embeds in 
$\mathbb R$ as an ordered subgroup, and it follows from our extra 
hypothesis on $v(K) _{0}$ that $V \cap v(K) _{0}$ does not contain a 
minimal element. Identifying $V$ with its isomorphic copy in 
$\mathbb R$, one also sees that $pV$ is dense in $\mathbb R$. These 
observations imply the existence of $\gamma ^{\prime } \in V$ with 
the required properties. Let $I$ be a finite Galois extension of $K$ 
in $I _{\infty }$, and let $\theta $ be an element of $\nabla 
_{0}(I)$, such that $N _{K}^{I}(\theta ) = 1 + \theta _{0}$, 
$v(\theta _{0}) < v(p)$ and $v(\theta _{0}) \notin pv(K)$. It is 
easily verified that $1 + \theta _{0} \notin K ^{\ast p}$, and 
therefore, $\theta $ is a primitive element of $I/K$. Denote by $f 
_{\theta }$ the minimal polynomial of $\theta $ over $K$. We show 
that $v _{I}(\theta - \theta ^{\prime }) \le v(\theta _{0})$, 
provided that $\theta ^{\prime } \in I$, $\theta ^{\prime } \neq 
\theta $ and $f _{\theta }(\theta ^{\prime }) = 0$. Assuming the 
opposite, one concludes that there exist a nontrivial cyclic 
subgroup $G \le \mathcal{G}(I/K)$ and some $\bar \gamma \in v(K)$, 
such that $v(\theta _{0}) < \bar \gamma < v(p)$ and $v _{I}(\theta - 
\psi (\theta )) \ge \bar \gamma $, for every $\psi \in G$. This 
implies $v _{I}(N _{J}^{I}(\theta ) - 1 - (\theta - 1) ^{p ^{h}}) 
\ge \bar \gamma $, where $J$ is the fixed field of $G _{0}$ and $p 
^{h}$ is the order of $H$. Thus it turns out that $v _{I}(N _{K} 
^{I}(\theta ) - 1 - (\tilde \theta - 1) ^{p}) \ge \bar \gamma $, for 
a suitably chosen $\tilde \theta \in \nabla _{0}(I)$. As $N _{K} 
^{I}(\theta ) = 1 + \theta _{0}$ and $v(\theta _{0}) < \bar \gamma 
$, our conclusion requires that $v _{I}((\tilde \theta - 1) ^{p}) = 
v(\theta _{0})$. This, however, contradicts the assumptions that 
$v(I) = v(K)$ and $v(\theta _{0}) \notin pv(K)$, and so proves that 
the roots of $f _{\theta }$ satisfy the claimed inequality. In view 
of (2.7), Lemma \ref{lemm2.1} and the noted property of the set 
$v(K) _{0} \setminus pv(K)$, the obtained result implies the former 
assertion of (2.8) (ii).

\medskip
\section{Preparation for the proof of Theorem \ref{theo1.2}}

\par
\medskip
The proof of Theorem \ref{theo1.2} relies on the following two
lemmas.
\par
\medskip
\begin{lemm}
\label{lemm3.1}
Let $(E, v)$ be a Henselian field with {\rm char}$(\widehat
E) = p \neq 0$ and $v(E) = pv(E)$. Assume that $p \in P(E)$,
$r(p) _{E} \in \mathbb N$, and in case {\rm char}$(E) = 0$, $E$
contains a primitive $p$-th root of unity $\varepsilon $. Then:
\par
{\rm (a)} $\widehat E$ is perfect and {\rm Br}$(E) _{p} = \{0\}$;
\par
{\rm (b)} $\mathcal{G}(E(p)/E)$ is a free pro-$p$-group; in
particular, every cyclic extension $L$ of $E$ in $E(p)$ lies in $I(L
_{\infty }/E)$, for some $\mathbb Z_{p}$-extension $L _{\infty }/E$,
$L \subseteq E(p)$;
\par
{\rm (c)} If $E$ is perfect and $v(E) \le \mathbb R$, then finite
extensions of $E$ in $E(p)$ are quasiinertial, whence every $\mathbb
Z _{p}$-extension of $E$ is quasiinertial.
\end{lemm}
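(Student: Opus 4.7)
For (i), I first establish $v(E) = pv(E)$ using (2.4): alternative (ii) of (2.4) is excluded by our hypothesis (either ${\rm char}(E) = p \neq 0$ or $\mu _{p} \subset E$ in mixed characteristic), while alternative (i) would produce a totally ramified proper $p$-extension, contradicting that $E(p)/E$ is immediate. Perfectness of $\widehat E$ is seen as follows: in mixed characteristic, any $\bar a \in \widehat E \setminus \widehat E^{p}$ lifts to some $a \in O _{v}(E)^{\ast}$, whereupon the Kummer extension $E(a^{1/p})$ is a separable degree-$p$ subextension of $E(p)$ with nontrivial (purely inseparable) residue extension, contradicting immediateness; the equal-characteristic case requires a subtler argument based on Artin--Schreier--Witt vectors together with the finiteness of $r(p)_{E}$. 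Finally, for ${\rm Br}(E) _{p} = \{0\}$, every class is split by a finite immediate subextension of $E(p)$, and applying (2.3) to a division-algebra representative $\Delta$ together with the already-established perfectness of $\widehat E$ and $v(E) = pv(E)$ reduces the claim to showing that the relevant cyclic $p$-algebras (Kummer symbols in mixed characteristic, Artin--Schreier symbols in equal characteristic) over $E$ are trivial.

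For (ii), the vanishing ${\rm Br}(E) _{p} = \{0\}$ translates into $H^{2}(\mathcal{G}(E(p)/E), \mathbb F_{p}) = 0$, via Kummer duality (with $\mu_{p} \subset E$ in mixed characteristic) or Artin--Schreier duality (in equal characteristic). Hence ${\rm cd}_{p}(\mathcal{G}(E(p)/E)) \le 1$, and by Serre's theorem for pro-$p$-groups, $\mathcal{G}(E(p)/E)$ is free, necessarily of rank $r = r(p)_{E} < \infty$, with abelianization $\mathbb Z_{p}^{r}$. Every cyclic extension $L/E$ of degree $p^{n}$ in $E(p)$ then corresponds to a surjection $\mathbb Z_{p}^{r} \twoheadrightarrow \mathbb Z/p^{n}\mathbb Z$, which lifts to a surjection onto $\mathbb Z_{p}$ (by projectivity of $\mathbb Z_{p}$ as a continuous $\mathbb Z_{p}$-module); the fixed field of its kernel is the desired $\mathbb Z_{p}$-extension $L_{\infty}$ containing $L$.

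For (iii), observe first that (2.8)(ii) is inapplicable here, because $v(E) = pv(E)$ makes the only nontrivial isolated subgroup of $v(E)$ itself $p$-divisible. By transitivity of the quasiinertial property (remark after (2.6)(iii)) it suffices to treat $F/E$ cyclic of degree $p$, for which I verify (2.7). In equal characteristic, the Artin--Schreier generator $\alpha$ with $\wp (\alpha) = a$ satisfies $\sigma (\alpha) - \alpha = 1$, so $v_{F}(\sigma (\alpha) - \alpha) = 0 < \gamma$ for every $\gamma > 0$, and $\beta := \alpha$ works. In mixed characteristic, the Kummer generator $\alpha$ (normalized so $v_{F}(\alpha) = 0$) satisfies $\sigma (\alpha) - \alpha = (\zeta - 1)\alpha$ with fixed valuation $v(p)/(p - 1)$; using immediateness of $F/E$, the perfectness of $\widehat E$ from (i), and the density of $v(E)$ in $\mathbb R$ arising from $p$-divisibility, one approximates $\alpha$ by elements $a' \in E$ with $v_{F}(\alpha - a')$ arbitrarily close to (but bounded above by) the intrinsic value $v(p)/(p - 1)$, then scales $\beta := c(\alpha - a')$ with $v_{E}(c)$ slightly exceeding $-v(p)/(p - 1)$, obtaining $v_{F}(\sigma \beta - \beta) = v_{E}(c) + v(p)/(p - 1) < \gamma$. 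The main obstacle is precisely this mixed-characteristic approximation step, which reflects the near-\'etale behavior of finite extensions of $E$ in the perfectoid-like setting assumed in (iii); the equal-characteristic proof of $\widehat E$ perfect within (i) is the other technically delicate point.
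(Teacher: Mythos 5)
Your part (ii) is essentially correct and matches the paper's route in substance: the cohomological translation of ${\rm Br}(E)_{p}=\{0\}$ into ${\rm cd}_{p}\le 1$ (automatic in equal characteristic), Serre's criterion for freeness, and the lifting of a surjection $\mathbb Z_{p}^{r}\twoheadrightarrow\mathbb Z/p^{n}\mathbb Z$ to $\mathbb Z_{p}$. Elsewhere, however, there are genuine gaps. In (i), the assertion ${\rm Br}(E)_{p}=\{0\}$ is not actually proved: you ``reduce'' it to the triviality of the relevant cyclic $p$-algebras and stop, and your preliminary claim that every class of ${\rm Br}(E)_{p}$ is split by a finite immediate subextension of $E(p)$ is itself unjustified for non-cyclic classes. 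The paper disposes of this point by citation (\cite{J}, Proposition~4.4.8, in equal characteristic; \cite{E1}, Proposition~3.4, or \cite{Ch3}, Proposition~2.5, in the Kummer case), and your sketch does not replace those inputs. Similarly, the equal-characteristic perfectness of $\widehat E$ is left as an admitted hand-wave, whereas the paper obtains it from \cite{Ch3}, Lemma~3.2, using $v(E)=pv(E)$ and $r(p)_{E}<\infty $.

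The concrete error is in (iii), equal characteristic. An Artin--Schreier generator $\alpha $ with $\wp (\alpha )=a$ of a nontrivial immediate degree-$p$ extension necessarily satisfies $v_{F}(\alpha )=v(a)/p<0$: every representative $a$ of its Artin--Schreier class has negative value, since $v(a)>0$ forces a root in $E$ by (2.2), and $v(a)=0$ with $\bar a\notin \wp (\widehat E)$ gives an inertial, hence non-immediate, extension. So $\alpha \notin O_{v}(F)$ and cannot serve as the element $\beta _{\gamma }$ of (2.7), which must lie in $O_{v}(F)$ (in (2.6)(ii) the primitive element is required to lie in $O_{v}(M)$; were elements of negative value admissible, every extension would trivially be quasiinertial and the notion would be vacuous). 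The actual work, which your argument skips, is to use the perfectness of $E$ to replace the parameter by its successive $p$-th roots --- $t^{-1}$ and $t^{-1/p}$ lie in the same Artin--Schreier class because $t^{-1}-t^{-1/p}=\wp (t^{-1/p})$ --- so that $v(t_{n})\to 0$, and then to take $\beta =\xi _{n}^{-1}\in O_{v}(F)$, for which $v_{F}(\psi (\xi _{n}^{-1})-\xi _{n}^{-1})=2v(t_{n})/p\to 0$. Your mixed-characteristic step has the analogous gap: the claim that $\alpha $ can be approximated by elements of $E$ to within any margin of $v(p)/(p-1)$ is precisely the content of the paper's statement (3.2), whose proof uses $r(p)_{E}\in \mathbb N$ (to obtain the Kummer-theoretic bound $d$) and a nontrivial convergence argument for the sequence $v(b_{n})$; you flag it as ``the main obstacle'' but do not supply it, and immediacy together with density of $v(E)$ alone do not yield it.
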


\begin{proof}
The assumption on $r(p)_{E}$ and \cite{Ch6}, Lemma~4.1, imply that
$\widehat E$ is perfect. We show that Br$(E) _{p} = \{0\}$ and
$\mathcal{G}(E(p)/E)$ is a free pro-$p$-group. When char$(E) = p$,
this is a special case of \cite{J}, Proposition~4.4.8, and
\cite{S1}, Ch. II, Proposition~2, respectively. If $\varepsilon \in
E$, the two assertions are equivalent (by Galois cohomology, see
\cite{T}, page 265, \cite{S1}, Ch. I, 4.2, and \cite{W}, page 725),
so they are contained in \cite{E1}, Proposition~3.4 (or \cite{Ch3},
Proposition 2.5). This indicates that $\mathcal{G}(E(p)/E) \cong
\mathcal{G}_{Y}$, for some field $Y$ of characteristic $p$
\cite{LvdD}, (4.8) (see also \cite{Ch1}, Remark~2.6). The obtained
result, combined with Galois theory and Witt's lemma (see
\cite{Dr1}, Sect. 15), completes the proof of Lemma \ref{lemm3.1}
(a) and (b). Since the class of free pro-$p$-groups is closed under
taking open subgroups (cf. \cite{S1}, Ch. I, 4.2 and
Proposition~14), it becomes clear from Lemma \ref{lemm2.1} that it
suffices for the proof of Lemma \ref{lemm3.1} (c) to show that every
degree $p$ extension $F$ of $E$ in $E(p)$ is quasiinertial. If $F/E$
is inertial, there is nothing to prove, so we assume that this is
not the case. As $v(E) = pv(E)$ and $[F\colon E] = p$, this means 
that $F/E$ is immediate. Let $\psi $ be a generator of 
$\mathcal{G}(F/E)$. Clearly, the claimed property of $F/E$ can be 
deduced from the following assertion:
\par
\medskip
(3.1) $F$ contains elements $\lambda _{n}$, $n \in \mathbb N$, such
that $0 < v _{F}(\lambda _{n}) < v _{F}(\psi (\lambda _{n}) -
\lambda _{n}) < 1/n$, for each index $n$.
\par
\medskip\noindent
Our objective is to prove (3.1). Suppose first that char$(E) = p$ 
and $E$ is perfect. Then the Artin-Schreier theorem implies the 
existence of a sequence $t = \{t _{n} \in M _{v}(E)\colon \ n \in 
\mathbb N\}$, such that $t _{n+1} ^{p} = t _{n} \neq 0$ and the 
polynomial $X ^{p} - X - t _{n} ^{-1}$ is irreducible over $E$ with 
a root $\xi _{n} \in F$, for each index $n$. Observing that $\xi 
_{n} ^{-1} = t _{n}\prod _{j=1} ^{p-1} (\xi _{n} + j)$ and $v _{F} 
(\xi _{n}) = p ^{-1}v(t _{n} ^{-1})$, one obtains by direct 
calculations that $v _{F}(\xi _{n} ^{-1}) = p ^{-1}v(t _{n})$ and $v 
_{F}(\psi (\xi _{n} ^{-1}) - \xi _{n} ^{-1}) = 2v _{F}(\xi _{n} 
^{-1})$. Therefore, $\nabla _{0}(F)$ contains the elements $\lambda 
_{n} = \xi _{n}\psi (\xi _{n} ^{-1})$, $n \in \mathbb N$, and $v 
_{F}(\lambda _{n} - 1) = p ^{-1}v(t _{n})$, for every index $n$. The 
obtained result proves (3.1) in the case where char$(E) = p$ and $E$ 
is perfect. Assume now that $\varepsilon \in E$. In view of Kummer 
theory and the equality $\nabla _{0}(K)K ^{\ast p} = K ^{\ast }$ 
(cf. \cite{E2}, Lemma~3.3), $F$ is generated over $K$ by a $p$-th 
root of the sum $1 + \pi $, for some $\pi \in M _{v}(K)$. We prove 
Lemma \ref{lemm3.1} (c) together with the following statement:
\par
\medskip
(3.2) There exists a sequence $\pi _{n} \in M _{v}(K)$, $n \in
\mathbb N$, such that $(1 + (\varepsilon - 1) ^{p}\pi _{n} ^{-1})K
^{\ast p} = (1 + \pi )K ^{\ast p}$ and $1/n > v(\pi _{n}) > v(\pi
_{n+1})$, for each $n \in \mathbb N$.
\par
\medskip\noindent
As $r(p)_{K} \in \mathbb N$, Kummer theory ensures the existence of
a number $d \in \mathbb R$, $d > 0$, such that the cosets $\lambda K
^{\ast p}$, $\lambda \in K ^{\ast }$, have representatives in
$\nabla _{d}$; in particular, one may assume without loss of 
generality that $v(\pi ) \ge d$. In addition, it is not difficult to 
see that, for each $n \in \mathbb N$, $M _{v}(K)$ contains elements 
$a _{n,j}\colon \ j = 1, \dots , n$, such that $v(a _{n,1} ^{p}) = 
v(\pi )$, $v(\pi - \sum _{j=1} ^{n} a _{n,j} ^{p}) > pv(a _{n,n})$ 
and $v(a _{n,j}) \ge (d/p) + v(a _{n,(j-1)})$, provided that $j \ge 
2$. Also, it is known that $\nabla _{\bar p}(K) \subset K ^{\ast p}$, 
where $\bar p = (p/(p-1))v(p)$, which implies $(1 + \pi )K ^{\ast p} 
= (\sum _{j=1} ^{n} a _{n,j} ^{p})K ^{\ast p}$, for every 
sufficiently large $n$. Note also that $\nabla _{v(p)}(K)$ contains 
the element $(1 + \sum _{j=1} ^{n} a _{n,j} ^{p})(1 - \sum _{j=1} 
^{n} a _{n,j}) ^{p}$. Thus it turns out that there exist $b _{n} \in 
M _{v}(K)$, $n \in \mathbb N$, such that $(1 + pb _{n})K ^{\ast p} = 
(1 + \pi )K ^{\ast p}$ and $v(b _{n+1} ^{p}) = v(pb _{n})$, for each 
index $n$. As $F/K$ is immediate and $v(p) = (p - 1)v(\varepsilon - 
1)$, this implies $v(b _{n}) < v(\varepsilon - 1)$, for each $n \in 
\mathbb N$, which enables one to prove that the sequence $v(b 
_{n})$, $n \in \mathbb N$, increases and converges to $v(\varepsilon 
- 1)$. Hence, $b _{n}$, $n \in \mathbb N$, possesses a subsequence 
$b _{n} ^{\prime }$, $n \in \mathbb N$, such that $v(b _{n} ^{\prime 
}) > v(\varepsilon - 1) - (1/n)$, for each index $n$. It is 
therefore clear that the sequence $\pi _{n} = (\varepsilon - 1) 
^{p}/(pb _{n} ^{\prime })$, $n \in \mathbb N$, satisfies (3.2). 
Consider now the  polynomials $f _{n}(X)$, $g _{n}(X)$, $h _{n}(X)$, 
$t _{n}(X) \in K[X]$, $n \in \mathbb N$, defined by the rule $f 
_{n}(X) = X ^{p} - X - \pi _{n} ^{-1}$, $g _{n}(X) = \pi _{n} ^{p}f 
_{n}(X/\pi _{n}) = X ^{p} - \pi _{n} ^{p-1}X - \pi _{n} ^{p-1}$, $h 
_{n}(X) = (\varepsilon - 1) ^{-p}[((\varepsilon - 1)X + 1) ^{p} - 1 
- (\varepsilon - 1) ^{p}\pi _{n} ^{-1}]$ and $t _{n}(X) = \pi _{n} 
^{p}h(X/\pi _{n})$, for each $n$. It is easily verified that $t 
_{n}(X) \in O _{v}(K)[X]$, $t _{n}(X)$ is monic and the coefficients 
of the difference $t _{n}(X) – g _{n}(X)$ are divisible by 
$\varepsilon – 1$ (in $O _{v}(K)$). These observations enable one to 
deduce from (2.1) that there exists $N _{0} \in \mathbb N$, such 
that $f _{n}(X)$, $g _{n}(X)$, $h _{n}(X)$ and $t _{n}(X)$ are 
irreducible over $K$ and have roots in $F$, for each $n > N _{0}$. 
They also show that $N _{0}$ can be chosen so that $v _{F}(\eta _{n} 
^{-1}) = (1/p)v(\pi _{n})$ and $v _{F}(\psi (\eta _{n} ^{-1}) - \eta 
_{n} ^{-1}) = (2/p)v(\pi _{n}) < 2/(pn)$ whenever $n > N _{0}$, 
$\eta _{n} \in F$ and $f _{n}(\eta _{n}) = 0$. When both conditions 
hold, the sequence $\lambda _{n} = \eta _{n+N _{0}} ^{-1}$, $n \in 
\mathbb N$, satisfies (3.1), which proves Lemma \ref{lemm3.1} (c).
\end{proof}

\medskip
\begin{lemm}
\label{lemm3.2} Let $(E, w)$ be a Henselian field with
{\rm char}$(\widehat E) = q > 0$, $w(E) \neq qw(E)$ and {\rm 
Br}$(E ^{\prime }) _{q} = \{0\}$. Assume that $w(E)$ is 
Archimedean and $E ^{\prime } \in I(E _{\rm sep}/E)$ is the 
root field over $E$ of the binomial $X ^{q} - 1$. Then:
\par
{\rm (a)} $\widehat E$ is perfect, $w(E)/qw(E)$ is of order $q$, $q
\in P(E)$ and finite extensions of $E$ in $E(q)$ are totally
ramified; in particular, $q \notin P(\widehat E)$;
\par
{\rm (b)} For any cyclic extension $\Phi $ of $E$ in $E(q)$, there
exists $\Gamma _{0} \in I(E ^{\prime }(q)/E)$, such that $E ^{\prime
}(q)/\Gamma _{0}$ is a $\mathbb Z_{q}$-extension and $\Phi \cap \Gamma
_{0} = E$.
\end{lemm}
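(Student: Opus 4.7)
My plan is to establish (i) first, then use it Galois-theoretically for (ii). For (i), $q \in P(E)$ follows from (2.4) applied with $p = q$: since $w(E)$ is Archimedean, its only isolated subgroups are $\{0\}$ and $w(E)$, so in the mixed-characteristic case the minimal isolated subgroup containing $w(q)$ is $w(E)$ itself, which cannot be $q$-divisible as $w(E) \neq qw(E)$. Thus clause (ii) of (2.4) is ruled out, and clause (i) produces a totally ramified proper extension of $E$ in $E(q)$. Both perfectness of $\widehat E$ and $|w(E)/qw(E)| = q$ then follow from $\mathrm{Br}(E)_q = \{0\}$ by standard symbol-algebra theory in residue characteristic $q$: imperfectness of $\widehat E$ would produce a nontrivial $q$-symbol via an Artin--Schreier (char $E = q$) or Kummer (char $E = 0$, after passage to $E'$) construction using an element $\pi \in E$ with $w(\pi) \notin qw(E)$, while if $|w(E)/qw(E)| \geq q^2$, passage to $E'$ (where $[E':E] \mid q - 1$ gives $\mathrm{Br}(E')_q = \{0\}$ by restriction and $w(E)/qw(E) \cong w(E')/qw(E')$ by the coprime-degree argument for value groups) would yield a non-split cyclic algebra from two uniformizers independent modulo $qw(E')$. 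Finally, total ramification of finite extensions in $E(q)$ follows because perfectness of $\widehat E$ in characteristic $q$ forbids non-trivial separable residue extensions of $q$-power degree (else an Artin--Schreier lift would give an unramified $q$-extension, yielding a non-zero Brauer class when paired with a totally ramified element), and defectlessness is likewise secured.

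For (ii), set $G = \mathcal{G}(E'(q)/E)$, $N = \mathcal{G}(E'(q)/E')$, $\Delta = \mathcal{G}(E'/E)$. Since $|\Delta|$ divides $q-1$ and is coprime to $q$, restriction gives $\mathrm{Br}(E')_q = \{0\}$, and $E'$ retains the hypotheses of the lemma while containing a primitive $q$-th root of unity. By the Kummer/Bockstein identification of $\mathrm{Br}(E')[q]$ with a continuous $H^2$ of $N$ (as invoked in the proof of Lemma 3.1 (ii)), $N$ is a free pro-$q$-group. Let $H_\Phi = \mathcal{G}(E'(q)/\Phi)$, open normal in $G$ with $G/H_\Phi \cong \mathcal{G}(\Phi/E)$ cyclic of $q$-power order. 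The quotient $G/(H_\Phi N)$ is simultaneously a quotient of $\Delta$ (prime-to-$q$ order) and of $G/H_\Phi$ ($q$-power order), hence trivial, so $G = H_\Phi N$ and the restriction $N \to G/H_\Phi$ is surjective.

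Pick $x \in N$ mapping to a generator of $G/H_\Phi$. By Serre's theorem the free pro-$q$-group $N$ is torsion-free, so the closed procyclic subgroup $\overline{\langle x \rangle} \le N$ is isomorphic to $\mathbb{Z}_q$. Setting $\Gamma_0 := E'(q)^{\overline{\langle x \rangle}}$ yields $E'(q)/\Gamma_0$ a $\mathbb{Z}_q$-extension, and since $\overline{\langle x \rangle} \cdot H_\Phi = G$ (as $\overline{\langle x \rangle}$ surjects onto $G/H_\Phi$), the Galois correspondence gives $\Phi \cap \Gamma_0 = E$. The main obstacle is the content of (i), especially extracting perfectness of $\widehat E$ and the exact order of $w(E)/qw(E)$ from $\mathrm{Br}(E)_q = \{0\}$, which requires careful symbol-algebra bookkeeping in each characteristic case; by contrast, the construction of $\Gamma_0$ in (ii) is fairly routine Galois theory once freeness of $N$ is secured.
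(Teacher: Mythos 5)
Your proposal reaches the right conclusions, but it diverges from the paper's route in both parts, and part (i) has one real soft spot. For (ii) the paper applies Zorn's lemma to the set $Y(\Phi )=\{Y\in I(E'(q)/E):\ Y\cap \Phi =E\}$, takes a maximal element as $\Gamma _{0}$, and then uses Whaples' theorem to see that $E'(q)/\Gamma _{0}$ is a $\mathbb Z_{q}$-extension; you instead lift a generator of $\mathcal{G}(\Phi /E)$ to an element $x$ of the free (hence torsion-free) pro-$q$-group $N=\mathcal{G}(E'(q)/E')$ and take $\Gamma _{0}$ to be the fixed field of $\overline{\langle x\rangle }\cong \mathbb Z_{q}$. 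That argument is correct (normality of $H_{\Phi }$ makes $H_{\Phi }\overline{\langle x\rangle }$ a closed subgroup equal to $G$, whence $\Phi \cap \Gamma _{0}=E$) and arguably more transparent than the paper's. The one caveat is that freeness of $N$ cannot literally be ``invoked from Lemma 3.1(ii)'', whose hypotheses ($E(p)/E$ immediate, $r(p)_{E}<\infty $) fail here --- finite extensions of $E$ in $E(q)$ are totally ramified and $r(q)_{E}=\infty $. What you need is only the sub-argument used there: the injectivity of $H^{2}(\mathcal{G}(E'(q)/E'),\mu _{q})\to {}_{q}{\rm Br}(E')$ when $\zeta _{q}\in E'$ (resp.\ Witt--Serre in characteristic $q$), together with ${\rm Br}(E')_{q}=\{0\}$ from restriction--corestriction; say that explicitly.

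For (i) the paper argues uniformly: for any cyclic $L/E$ of degree $q^{k}$ in $E(q)$, ${\rm Br}(L/E)\cong E^{\ast }/N(L/E)$ is trivial, so $N(L/E)=E^{\ast }$; since $w(N_{E}^{L}(\lambda ))=q^{k}w_{L}(\lambda )$, this forces $w(E)=q^{k}w(L)$, which together with (2.3) gives at once $|w(E)/qw(E)|=q$, $\widehat L=\widehat E$ and defectlessness, i.e.\ total ramification. You replace this by ad hoc division-algebra constructions contradicting ${\rm Br}(E)_{q}=\{0\}$. The individual claims are true, but two points need repair. First, ``standard symbol-algebra theory'' is the tame theory; here $q={\rm char}(\widehat E)$, so the usual proofs that $(u,\pi )_{\zeta }$ or $(\pi _{1},\pi _{2})_{\zeta }$ are division algebras do not apply, and in equal characteristic $E'=E$ contains no $\zeta _{q}$ at all, so Artin--Schreier (differential) symbols are needed throughout, not only for the imperfectness step. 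All your nonsplitness claims do hold, but the robust way to see each of them is precisely the norm-value computation $w(N(L/E))\subseteq [L:E]\,w(L)$ --- i.e.\ the paper's argument --- rather than symbol calculus. Second, and more seriously, ``defectlessness is likewise secured'' is not an argument: ruling out an immediate cyclic extension $L/E$ of degree $q$ is the crux for the fields this paper is about, and it does not follow from perfectness of $\widehat E$ (nor does perfectness ``forbid'' separable residue extensions --- it guarantees residue extensions are separable, after which ${\rm Br}(E)_{q}=\{0\}$ forbids them). The defect case does succumb to the same computation, since an immediate $L/E$ has $N(L/E)\subseteq \{x\in E^{\ast }:\ w(x)\in qw(E)\}\subsetneq E^{\ast }$, but as written this is a genuine gap in your outline of (i).
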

\par
\medskip
\begin{proof} The assertion that $q \in P(E)$ follows from the fact
that $(E, w)$ satisfies condition (2.4) (i). Since, by \cite{La},
Theorem~3.16, $E$ is a nonreal field, this assertion and \cite{Wh},
Theorem~2, indicate that $E(q)$ contains as a subfield a $\mathbb Z
_{q}$-extension $\Gamma $ of $E$; in particular, $[E(q)\colon E] =
\infty $. Let $L$ be a finite extension of $E$ in $E(q)$, and let
$[L\colon E] = q ^{k}$. It is clear from \cite{Ch2}, I, Lemma~4.2,
and the triviality of Br$(E) _{q}$ that $N(L/E) = E ^{\ast }$.
Hence, by the Henselity of $w$, $q ^{k}w(L) = w(E)$, which
implies in conjunction with (2.3) and the inequality $w(E) \neq
qw(E)$ that $\widehat \Phi = \widehat E$ and $w(E)/q ^{k}w(E)$ is a
cyclic group of order $q ^{k}$. These observations, combined with
(2.5) (iii), prove Lemma \ref{lemm3.2} (a). They also enable one to 
deduce from Galois theory the existence of a field $E _{1} \in 
I(E(q)/E)$, such that $[E _{1}\colon E] \le q$, $E _{1} \cap \Phi = 
E$ and $\Phi E _{1} = \Gamma E _{1}$. Clearly, $\Gamma E _{1}/E 
_{1}$ is a $\mathbb Z _{q}$-extension. For the rest of the proof of 
Lemma \ref{lemm3.2} (b), it suffices to observe that the set $Y(\Phi 
) = \{Y \in I(E ^{\prime }(q)/E _{1})\colon \ Y \cap \Phi = E\}$, 
partially ordered by inclusion, satisfies the conditions of Zorn's 
lemma, to take as $\Gamma _{0}$ any maximal element of $Y(\Phi )$, 
and using the projectivity of $\mathbb Z _{q}$ as a profinite group 
(cf. \cite{S1}, Ch. I, 5.9), to prove that $\Gamma _{0}\Phi _{1} = E 
^{\prime }(q)$ and $\mathcal{G}(\Gamma _{0}\Phi _{1}/\Gamma _{0}) 
\cong \mathbb Z _{q}$.
\end{proof}

\medskip
\begin{rema}
\label{rema3.3}
Retaining assumptions and notation as in Lemma \ref{lemm3.2}, put 
$\Gamma _{\ast } = E(q) \cap \Gamma _{0}$ and denote by $\Gamma 
_{n}$ the extension of $\Gamma _{0}$ in $E ^{\prime }(q)$ of degree 
$q ^{n}$, for each $n \in \mathbb N$. Observing that $E ^{\prime 
}/E$ is cyclic and $[E ^{\prime }\colon E] \mid (q - 1)$, one 
obtains that $E ^{\prime } (q)/E$ is Galois and $\Gamma _{0}$ 
contains a primitive $q$-th root of unity unless char$(E) = q$. Note 
further that $[\Gamma _{\ast }\colon E] = \infty $. Indeed, Lemma 
\ref{lemm3.2} (a) ensures that $\widehat E$ is an infinite perfect 
field (with $r _{q}(\widehat E) = 0$), so it it follows from 
\cite{Ch6}, Remark~4.2 and Lemma~4.3, that $r(q) _{E} = \infty $. By 
Galois theory, this means that there are infinitely many degree $q$ 
extensions of $E$ in $\Gamma _{\ast }$, whence $[\Gamma _{\ast 
}\colon E] = \infty $, as claimed. In addition, it follows from 
(2.3) and Lemma \ref{lemm3.2} (a) that $\widehat \Gamma _{\ast } = 
\widehat E$, $\widehat E ^{\prime }(q) = \widehat E ^{\prime }$, 
$w(E ^{\prime }\Gamma _{\ast }) = w(E ^{\prime }) + w(\Gamma _{\ast 
})$, $w(\Gamma _{\ast }) = qw(\Gamma _{\ast })$ and $w(E ^{\prime 
}\Gamma _{\ast }) = qw(E ^{\prime }\Gamma _{\ast })$. Observing also 
that $E ^{\prime }(q) = (E ^{\prime }\Gamma _{\ast }) (q) = \Gamma 
_{0}(q)$, one deduces from Lemma \ref{lemm3.1} that $E ^{\prime } 
(q)/\Gamma _{0}$ is an immediate quasiinertial $\mathbb Z 
_{q}$-extension.
\end{rema}
\par
\medskip
\section{Proof of Theorem \ref{theo1.2}}
\par
\medskip
Throughout this Section, we assume that $(\Phi , \omega )$ and $T$
satisfy the conditions of Theorem \ref{theo1.2}, and $\overline \Phi
$ is an algebraic closure of $\Phi _{\rm sep}$. Put $S(T) = \{p \in
\mathbb P\colon \ T _{p} \neq \{0\}\}$, $S _{q}(T) = S(T) \setminus 
\{q\}$, $S ^{\prime }(T) = \mathbb P \setminus S _{q}(T)$, and 
denote by $U$ the maximal extension of $\Phi $ in $\Phi _{\rm ur}$ 
whose finite subextensions have degrees not divisible by any $p \in S 
_{q}(T)$. The assumptions on $\Phi $, $\omega $ and $\widehat \Phi $ 
and the definition of $U$ indicate that $U/\Phi $ and $\Phi _{\rm 
ur}/U$ are Galois extensions with $\mathcal{G}(U/\Phi )$ and 
$\mathcal{G}(\Phi _{\rm ur}/U)$ isomorphic to the topological group 
products $\prod _{\pi ' \in S'(T)} \mathbb Z _{\pi '}$ and $\prod 
_{\pi_{q} \in S_{q}(T)} \mathbb Z _{\pi _{q}}$, respectively; this 
implies $q \notin \Pi (\widehat U)$, whence $\widehat U$ is 
infinite. As $\Phi $ is quasilocal, the obtained result proves (in 
conjunction with \cite{Ch2}, I, Proposition~4.4, Lemma~8.2 and 
Corollary~8.5) that Br$(U _{1}) _{\pi '} = \{0\}$, for every $U _{1} 
\in I(\overline \Phi /U)$ and each $\pi ^{\prime } \in S ^{\prime 
}(T)$. At the same time, it follows from (2.4) and the equality 
$\omega (U) = \omega (\Phi )$ that $\Phi (q) \notin I(U/\Phi )$, 
which ensures that $q \in P(U)$. Observing that $\omega _{U}$ is 
discrete and Henselian, one obtains from \cite{TY}, Proposition~2.2, 
that finite extensions of $U$ in $\Phi _{\rm sep}$ are defectless. 
Since $\widehat \Phi $ is perfect, $U$ does not possess inertial 
proper extensions in $U(q)$, and we have Br$(U _{1}) _{q} = \{0\}$, 
$U _{1} \in I(\overline \Phi /U)$, one also concludes that finite 
extensions of $U$ in $U(q)$ are totally ramified and 
$\mathcal{G}(U(q)/U)$ is a free pro-$q$-group (cf. \cite{S1}, Ch. I, 
4.2, and Ch. II,  ). Note further that $r(q) _{U} = \infty $; since 
$\omega _{U}$ is Henselian and discrete, and $\widehat U$ is 
infinite, this follows from \cite{Po}, (2.7) (as well as from Remark 
\ref{rema3.3} and the fact that Br$(U) _{q} = \{0\}$). The rest of 
our proof relies on the observation that the set $\Sigma $ of all 
$\Theta \in I(\Phi _{\rm sep}/U)$, such that $\Theta \cap \Phi _{\rm 
ur} = U$ and the degrees of finite extensions of $U$ in $\Theta $ 
are not divisible by $q$, is nonempty and satisfies the conditions 
of Zorn's lemma with respect to the partial ordering by inclusion. 
Fix a maximal element $\Theta ^{\prime } \in \Sigma $ and put 
$\omega ^{\prime } = \omega _{\Theta '}$. Then it follows from 
Galois theory, statement (2.3), the projectivity of 
$\mathcal{G}(\Phi _{\rm ur}/U)$ as a profinite group, and the 
triviality of the groups Br$(U _{1}) _{q}$, $U _{1} \in I(\overline 
\Phi /U)$, that $\Theta ^{\prime }$ satisfies the following:
\par
\medskip
(4.1) (i) $\Phi _{\rm ur}\Theta ^{\prime } = \Phi _{\rm tr}$; in 
particular, finite extensions of $U$ in $\Theta ^{\prime }$ are 
tamely totally ramified, $\omega ^{\prime }(\Theta ^{\prime }) \neq 
q\omega ^{\prime }(\Theta ^{\prime })$ and $\omega ^{\prime }(\Theta
^{\prime }) = p\omega ^{\prime }(\Theta ^{\prime })$, for each $p
\in \mathbb P \setminus \{q\}$.
\par
(ii) Finite extensions of $\Theta ^{\prime }$ in $\Theta ^{\prime
}(q)$ are totally ramified.
\par
(iii) $\mathcal{G}(\Theta ^{\prime }(q)/\Theta ^{\prime })$ is a
free pro-$q$-group, $r(q)_{\Theta '} = \infty $ and Br$(\Theta 
^{\prime \prime }) _{q} = \{0\}$, for every $\Theta ^{\prime \prime 
} \in I(\overline \Phi /\Theta ^{\prime })$.
\par
\medskip
The former assertion of (4.1) (iii) and \cite{Wh}, Theorem~2, imply 
the existence of a $\mathbb Z _{q}$-extension $\Gamma $ of $\Theta
^{\prime }$ in $\Phi _{\rm sep}$. Put $\Gamma _{0} = \Theta ^{\prime
}$, and for each $n \in \mathbb N$, let $\Gamma _{n}$ be the
extension of $\Theta ^{\prime }$ in $\Gamma $ of degree $q ^{n}$. It
follows from Galois theory and the assumption on $\widehat \Phi $
that the compositum $U ^{\prime } = \Theta ^{\prime }\Gamma \Phi
_{\rm ur}$ is a Galois extension of $\Theta ^{\prime }$ with
$\mathcal{G}(U ^{\prime }/\Theta ^{\prime }) \cong \prod _{\pi \in
S(T)} \mathbb Z _{\pi }$. This implies cd$(\mathcal{G}(U ^{\prime
}/\Theta ^{\prime })) = 1$, which means that $\mathcal{G}(U ^{\prime
}/\Theta ^{\prime })$ is a projective profinite group (cf. \cite{S1},
Ch. I, 4.2 and 5.9). Note also that the set $\widetilde \Sigma =
\{\widetilde \Theta \in I(\overline \Phi /\Theta ^{\prime })\colon \
\widetilde \Theta \cap U ^{\prime } = \Theta ^{\prime }\}$,
partially ordered by inclusion, satisfies the conditions of Zorn's
lemma. Let $\widetilde K$ be a maximal element of $\widetilde \Sigma
$, $\tilde v = \omega _{\widetilde K}$ and $\tilde k$ the residue
field of $(\widetilde K, \tilde v)$. It is easily verified that
$\widetilde K$ and $\tilde k$ are perfect fields, and it follows
from the projectivity of $\mathcal{G}(U ^{\prime }/\Theta ^{\prime
})$ that $\overline \Phi = U ^{\prime }\widetilde K$. Hence, by
Galois theory and the equality $\widetilde K \cap U ^{\prime } =
\Theta ^{\prime }$, $\mathcal{G}_{\widetilde K} \cong \mathcal{G}(U
^{\prime }/\Theta ^{\prime })$. Our argument, together with the
former part of (4.1) (iii), also proves that there exists a $\mathbb
Z _{q}$-extension of $\Theta ^{\prime }$ in $\widetilde K$. Since
$\omega $ is discrete, this enables one to deduce the former part of
the following assertion from (4.1) (i), (ii) and (2.3):
\par
\medskip
(4.2) $\tilde v(\widetilde K) = \mathbb Q$, $\tilde k/\widehat \Phi
$ is an algebraic extension and $\Gamma \widetilde K/\widetilde K$
is immediate. Moreover, $\widetilde K(q) = \Gamma \widetilde K$,
$\Gamma \widetilde K/\widetilde K$ is a $\mathbb Z _{q}$-extension
with $[\Gamma _{n}\widetilde K\colon \Gamma _{n-1}\widetilde K] =
q ^{n}$, for each $n \in \mathbb N$, and $\Phi _{\rm ur}\widetilde
K(q)/\Phi _{\rm ur}\widetilde K$ is a quasiinertial $\mathbb Z
_{q}$-extension.
\par
\medskip\noindent
As $\Gamma /\Theta ^{\prime }$ is a $\mathbb Z _{q}$-extension, 
$\widetilde K \cap U ^{\prime } = \Theta ^{\prime }$, and $\Phi 
_{\rm ur}$ contains a primitive $q$-th root of unity unless 
char$(\Phi ) = q$, the latter part of (4.2) follows at once from the 
former one, Galois theory and Lemma \ref{lemm3.1} (c). Taking into 
account that the degrees of finite extensions of $\widetilde K$ in 
$\Phi _{\rm ur}\widetilde K$ are not divisible by $q$ 
($\mathcal{G}(\Phi _{\rm ur}\widetilde K/\widetilde K) \cong 
\mathcal{G}(\Phi _{\rm ur}/U) \cong \prod _{\pi _{q} \in S_{q}(T)} 
\mathbb Z _{\pi _{q}}$), and using trace transitivity in towers of 
finite separable extensions, one concludes that (4.2) can be 
supplemented as follows:
\par
\medskip
(4.3) The $\mathbb Z _{q}$-extension $\Gamma \widetilde K/\widetilde 
K$ is quasiinertial.
\par
\medskip\noindent
We are now in a position to construct a quasilocal Henselian field of 
the type required by Theorem \ref{theo1.2}. Fix a positive number 
$\gamma \in \mathbb R \setminus \mathbb Q$ and a rational function 
field $\widetilde K(X)$ in one indeterminate over $\widetilde K$. It 
is easily verified that $\tilde v$ is uniquely extendable to a 
valuation $\tilde v _{\gamma }$ of $\widetilde K(X)$ satisfying the equality $\tilde v _{\gamma }(X) = \gamma $, and it follows from the choice of $\gamma $ that $\tilde v _{\gamma }(\widetilde K(X))$ is an Archimedean group equal to the sum of $\mathbb Q$ and $\langle \gamma \rangle $. In addition, it becomes clear that $\tilde v _{\gamma 
}(\widetilde K(X))$ is isomorphic (as an abstract group) to the 
direct sum $\mathbb Q \oplus  \langle \gamma \rangle $, and the 
residue field of $(\widetilde K(X), \tilde v _{\gamma })$ coincides 
with $\tilde k$. Note also that $\bar v _{\gamma }(\overline \Phi 
(X)) = \tilde v _{\gamma }(\widetilde K(X))$, where $\bar v _{\gamma 
}$ is the valuation of $\overline \Phi (X)$ naturally extending 
$\tilde v _{\overline \Phi }$ and $\tilde v _{\gamma }$. Now take a Henselization $(K, v)$ of $(\widetilde K(X), \tilde v _{\gamma })$ so that $K \subset \overline \Phi (X) _{\rm sep}$, and fix an algebraic closure $\overline K$ of $K$ including $\overline \Phi (X) _{\rm 
sep}$ as a subfield. It is well-known that $(K, v)/(\widetilde K(X), \tilde v _{\gamma })$ is immediate. The obtained properties of 
$\tilde v _{\gamma }(\widetilde K(X))$ and the equality $\tilde v _{\gamma }(\widetilde K(X)) = v(K)$ indicate that $v(K)/pv(K)$ is of order $p$ and $v(\gamma ) \notin pv(K)$, for any $p \in \mathbb P$; 
in particular, $v(K)$ is totally indivisible. We show that $K$, $v$ 
and $I _{\infty } = \Gamma K$ are admissible by Theorem 
\ref{theo1.2}. As a first step towards this, we prove the following:
\par
\medskip
(4.4) (i) $\widetilde K$ is algebraically closed in $K$ and
$\overline \Phi K/K$ is a Galois extension with
$\mathcal{G}(\overline \Phi K/K) \cong \mathcal{G}_{\widetilde K}
\cong \prod _{p \in S(T)} \mathbb Z _{p}$; in addition, $v(\overline
\Phi K) = v(K)$, $\Gamma K/K$ is an immediate $\mathbb Z
_{q}$-extension, and $[\Gamma _{n}K\colon K] = q ^{n}$, for each $n 
\in \mathbb N$;
\par
(ii) $\Gamma \Omega /\Omega $ is a quasiinertial $\mathbb Z
_{q}$-extension, for every finite extension $\Omega $ of $K$ in $\overline K$.
\par
\medskip\noindent
Let $K(\sqrt[q]{X})$ be an extension of $K$ in $\overline K$ obtained 
by adjunction of a $q$-th root of $X$. It is clear from the 
definition of $\tilde v _{\gamma }$ and the immediacy of the valued extension $(K, v)/(\widetilde K(X), \tilde v _{\gamma })$ that $K(\sqrt[q]{X})/K$ is totally ramified and $[K(\sqrt[q]{X})\colon K] 
= q$. Since $\widetilde K$ is perfect and $K \in I(\widetilde K(X) 
_{\rm sep}/\widetilde K(X))$, it is also clear that in case 
char$(\Phi ) = q$, $K(\sqrt[q]{X})$ is the unique purely inseparable extension of $K$ in $\overline K$ of degree $q$. Note further that 
the inclusion of $v(K) = \tilde v _{\gamma }(\widetilde K(X))$ in $\mathbb R$ guarantees that $\widetilde K(X) _{\tilde v _{\gamma }}$ 
is Henselian with respect to its valuation $v _{\gamma }$ 
continuously extending $\tilde v _{\gamma }$. As $(\widetilde K(X) _{\tilde v _{\gamma }}, v _{\gamma })$ is immediate over $(\widetilde 
K(X), \tilde v _{\gamma })$, these facts show that $K$ is $\widetilde K(X)$-isomorphic to the (relative) algebraic closure of $\widetilde 
K(X)$ in $\widetilde K(X) _{\tilde v _{\gamma }}$ (cf. \cite{E3}, 
Sect. 18.3). At the same time, it follows from the definition of the 
valuation $\bar v _{\gamma }$ of $\overline \Phi (X)$ that an element $\rho \in \overline \Phi $ lies in $\widetilde K(X) _{v _{\gamma }}$ 
if and only if $\rho \in \widetilde K _{\tilde v}$. Taking also into account that $\widetilde K$ is algebraically closed in $\widetilde K _{\tilde v}$ (because $\widetilde K$ is perfect and $\tilde v$ is Henselian), one concludes that $\widetilde K$ is algebraically closed 
in $K$. In view of Galois theory, this means that $\overline \Phi 
K/K$ is a Galois extension with $\mathcal{G}(\overline \Phi K/K) 
\cong \mathcal{G}_{\widetilde K}$. These observations prove the 
former part of (4.4) (i), so we turn to the proof of the latter one. Using the equalities $\bar v _{\gamma }(\overline \Phi (X)) = v 
_{\gamma }(\widetilde K(X)) = v(K)$, and replacing $\widetilde K$ by 
any of its finite extensions in $\overline \Phi $, one obtains 
further that $v(\overline \Phi K) = v(K)$. As cd$_{p'}(\mathcal{G} _{\tilde k}) = 0$, for every $p ^{\prime } \in \mathbb P \setminus 
S(T)$, this result implies in conjunction with (2.3) and (4.2) that $\Gamma K/K$ is immediate and $\Gamma \cap K = \Theta ^{\prime }$, so (4.4) (i) is proved. As to (4.4) (ii), it can be deduced from Galois theory and Lemma \ref{lemm2.1}, since $\Gamma \widetilde K/\widetilde 
K$ is quasiinertial (by (4.3)), $v(K) \le \mathbb R$, $v$ extends 
$\tilde v$ upon $K$, $v(K)$ is Archimedean and $\widetilde K$ is 
algebraically closed in $K$.
\par
Next we show that Br$(K) _{p} \neq \{0\}$ if and only if $p \in
S(T)$. Suppose first that $p \notin S(T)$. Then $p \dagger
[\widetilde M\colon \widetilde K]$, for any finite extension
$\widetilde M$ of $\widetilde K$, which implies Br$(K) _{p} \cap
{\rm Br}(\overline \Phi K/K) = \{0\}$ (cf. \cite{P}, Sect. 13.4). On
the other hand, $\overline \Phi K/\overline \Phi $ is a field
extension of transcendency degree $1$, so it follows from Tsen's
theorem (see \cite{P}, Sect. 19.4) that Br$(\overline \Phi K) =
\{0\}$. It is therefore easy to see that Br$(K) = {\rm Br}(\overline
\Phi K/K)$ and Br$(K) _{p} = \{0\}$. Assume now that $p \in S(T)$.
Then it follows from Galois theory and (4.4) that $I(\overline \Phi
K/K)$ contains a cyclic extension $Y _{p}$ of $K$ of degree $p$.
Moreover, by (4.4) (i), $v(Y _{p}) = v(K)$, whence the uniqueness of
$v _{Y _{p}}$ implies $N(Y _{p}/K) \subseteq \{\lambda \in K ^{\ast
}\colon \ v(\lambda ) \in pv(K)\}$. Since $v(K) \neq pv(K)$, this
means that Br$(Y _{p}/K) \neq \{0\} \neq {\rm Br}(K) _{p}$.
\par
It remains to be proved that $K$ is quasilocal and Br$(K) \cong T$.
Assuming as above that $p \in S(T)$, let $G _{p}$ be a Sylow
pro-$p$-subgroup of $\mathcal{G}_{K}$ and $K _{p}$ the fixed field
of $G _{p}$. We show that $K _{p}$ is $p$-quasilocal with Br$(K
_{p}) \cong \mathbb Z(p ^{\infty })$. The equality $v(K) = v
_{\gamma }(\widetilde K(X))$ and the isomorphism $v(K _{p})/pv(K
_{p}) \cong v(K)/pv(K)$ guarantee that $v(K _{p})/pv(K _{p})$ is of
order $p$. When $p \neq q$, this enables one to deduce from (4.4)
and \cite{E2}, Lemma~1.2, that $K _{p} ^{\ast }/K _{p} ^{\ast p}$ is
a group of order $p ^{2}$. As $K _{p}$ contains a primitive $p$-th
root of unity and Br$(K) _{p} \cap {\rm Br}(K _{p}/K) = \{0\}$, the
obtained results and Galois cohomology (see \cite{W}, Lemma~7,
\cite{MS}, (11.5), and \cite{S1}, Ch. I, 4.2) prove that $G _{p}$ is
a Demushkin group, $r(p)_{K _{p}} = 2$ and Br$(K _{p}) \cong \mathbb
Z(p ^{\infty })$. Hence, by \cite{Ch2}, I, Lemma~3.8, $K _{p}$ is
$p$-quasilocal. It remains to be seen that $K _{q}$ is
$q$-quasilocal and Br$(K _{q}) \cong \mathbb Z(q ^{\infty })$. As
$\tilde k$ is perfect, cd$_{q}(\mathcal{G}_{\tilde k}) = 0$ and
$\widehat K = \tilde k$, $\widehat K _{q}$ is an algebraic closure
of $\tilde k$, so $\widehat Z = \widehat K _{q}$, for each $Z \in
I(K _{\rm sep}/K _{q})$. In addition, it follows from Tsen's theorem
that Br$(K _{q}) = {\rm Br}(\Gamma K _{q}/K _{q})$. Applying (4.4),
(2.8) and Lemma \ref{lemm2.1}, one also sees that $\nabla
_{0}(\Gamma _{1}) \subseteq N(\Gamma _{n}K _{q}/\Gamma _{1}K _{q})$,
for each $n \in \mathbb N$. As $\Gamma _{1}K _{q}/K _{q}$ is
immediate, this enables one to deduce from (2.7) and Hilbert's
Theorem 90 that an element $\theta \in K _{q} ^{\ast }$ lies in
$N(\Gamma _{\nu }K _{q}/\Gamma _{1}K _{q})$, for a given index $\nu
$, if and only if $\theta ^{q} \in N(\Gamma _{\nu }K _{q}/K _{q})$.
Since Br$(\Gamma K _{q}/K _{q}) = \cup _{n=1} ^{\infty } {\rm
Br}(\Gamma _{n}K _{q}/K _{q})$, these observations and the canonical
isomorphisms Br$(\Gamma _{n}K _{q}/K _{q}) \cong K _{q} ^{\ast
}/N(\Gamma _{n}K _{q}/K _{q})$, $n \in \mathbb N$ (cf. \cite{P},
Sect. 15.1, Proposition~b), prove that $_{q}{\rm Br}(K _{q}) = {\rm
Br}(\Gamma _{1}K _{q}/K _{q})$. The obtained result, combined with
the fact that $\widehat K _{q}$ is algebraically closed and $v(K
_{q})/qv(K _{q})$ is of order $q$, proves that $N(\Gamma _{1}K
_{q}/K _{q}) = \{\mu \in K _{q} ^{\ast }\colon \ v(\mu ) \in qv(K
_{q})\}$, $_{q} {\rm Br}(K _{q})$ is of order $q$ and Br$(K _{q})
\cong \mathbb Z(q ^{\infty })$. Let now $\Lambda $ be an extension
of $K _{q}$ in $K _{\rm sep}$, such that $[\Lambda \colon K _{q}] =
q$ and $\Lambda \neq \Gamma _{1}K _{q}$, and let $V _{q}(\Lambda ) =
\{\lambda \in \Lambda \colon \ v _{\Lambda }(\lambda ) \in
qv(\Lambda )\}$. Applying (4.4) and (2.7), and arguing as in the
proof of the isomorphism Br$(K _{q}) \cong \mathbb Z(q ^{\infty })$,
one obtains consecutively the following results:
\par
\medskip
(4.5) (i) $V _{q} (\Lambda ) \subseteq N(\Gamma _{1}\Lambda /\Lambda
)$; $\tau (\lambda ^{\prime })\lambda ^{\prime -1} \in N(\Gamma
_{1}\Lambda /\Lambda )$, for each $\lambda ^{\prime } \in \Lambda
^{\ast }$ and every generator $\tau $ of $\mathcal{G}(\Lambda /K
_{q})$;
\par
(ii) Br$(\Gamma _{1}\Lambda /\Lambda ) =$ $_{q} {\rm Br}(\Lambda )
\neq \{0\}$; hence $N(\Gamma _{1}\Lambda /\Lambda ) \neq \Lambda
^{\ast }$.
\par
\medskip\noindent
As $\widehat \Lambda $ is algebraically closed and $v(\Lambda
)/qv(\Lambda )$ has order $q$, one also proves that
\par
\medskip
(4.6) (i) $N(\Gamma _{1}\Lambda /\Lambda ) = V _{q}(\Lambda )$ and
$\Gamma _{1}\Lambda /\Lambda $ is immediate.
\par
(ii) $K ^{\ast } \subseteq N(\Gamma _{1}\Lambda /\Lambda )$,
provided that $\Lambda $ is totally ramified over $K _{q}$; when
this holds, Br$(\Gamma _{1}/K _{q}) \subseteq {\rm Br}(\Lambda /K
_{q}) =$ $_{q} {\rm Br}(K _{q})$.
\par
\medskip\noindent
In view of (4.5) (ii) and (4.6) (ii), it suffices, for the proof of
the $q$-quasilocality of $K _{q}$, to show that $\Lambda /K _{q}$ is
totally ramified. Assuming the opposite, one gets from (2.3) and the
equality $\widehat \Lambda = \widehat K _{q}$ that $\Lambda /K _{q}$
is immediate. Fix a generator $\tau $ of $\mathcal{G}(\Lambda /K
_{q})$, denote by $\tau ^{\prime }$ the $\Gamma _{1}$-automorphism
of $\Gamma _{1}\Lambda $ extending $\tau $, and put $D _{\rho } =
(\Lambda /K _{q}, \tau , \rho )$, $\Delta _{\rho } = (\Gamma
_{1}\Lambda /\Gamma _{1}, \tau ^{\prime }, \rho )$, for some $\rho
\in K _{q} ^{\ast }$. Clearly, $\Delta _{\rho } \cong D _{\rho }
\otimes _{K _{q}} \Gamma _{1}$ over $\Gamma _{1}$. Hence, the
equality Br$(\Gamma _{1}/K _{q}) =$ $_{q} {\rm Br}(K _{q})$ requires
that $[\Delta _{\rho }] = 0$ in Br$(\Gamma _{1})$. On the other
hand, (4.6) (i) and the assumption on $\Lambda /K _{q}$ imply
$\Gamma _{1}\Lambda /\Gamma _{1}$ is immediate. This shows that if
$v(\rho ) \notin qv(K _{q})$, then $D _{\rho } \in d(K _{q})$ and
$\Delta _{\rho } \in d(\Gamma _{1})$, whence $[\Delta _{\rho }] \neq
0$. The observed contradiction proves that $\Lambda /K _{q}$ is
totally ramified, so $K _{q}$ is $q$-quasilocal (with Br$(K _{q})
\cong \mathbb Z(q ^{\infty })$).
\par
It is now easy to complete the proof of Theorem \ref{theo1.2}.
Indeed, it follows from \cite{Ch2}, I, Lemma~8.3, and the
$p$-quasilocal property of the fields $K _{p}$, $p \in \Pi (K)$,
that $K$ is quasilocal. As $K$ is nonreal and $S(T) = \{p \in
\mathbb P\colon \ {\rm Br}(K) _{p} \neq \{0\}\}$, this result,
\cite{Ch3}, Lemma~3.3 (i) (see also \cite{Ch2}, I, Theorem~3.1), and
the isomorphisms Br$(K _{p}) \cong \mathbb Z(p ^{\infty })$, $p \in
S(T)$, yield Br$(K) \cong T$. Theorem \ref{theo1.2} is proved.
\par
\medskip
\section{\bf Complements to Theorem \ref{theo1.2}}
\par
\medskip
First we show that, in residual characteristic $2$, Theorem
\ref{theo1.2} and \cite{Ch3}, Theorem~1.1, fully describe the
isomorphism classes of Brauer groups of quasilocal Henselian fields
admissible by Proposition \ref{prop1.1}.
\par
\medskip
\begin{prop}
\label{prop5.1}
Let $(K, v)$ be a quasilocal Henselian field satisfying the
conditions of Proposition \ref{prop1.1}, and let {\rm
char}$(\widehat K) = 2$. Then there exists an immediate
norm-inertial $\mathbb Z _{2}$-extension $\Gamma /K$; in particular,
{\rm Br}$(K) _{2} \cong \mathbb Z(2 ^{\infty })$.
\end{prop}
\par
\medskip
\begin{proof}
Proposition \ref{prop1.1} and our assumptions show that $\widehat K$ 
is perfect and cd$_{2}(\mathcal{G}_{\widehat K}) = 0$. In view of 
(2.3) and (2.5), this ensures that cd$_{2}(\mathcal{G}(K _{\rm 
tr}/K)) = 0$, $K _{\rm tr}$ is the fixed field of a Sylow 
pro-$2$-subgroup of $\mathcal{G}_{K}$, and $K _{\rm tr}$ has a 
$\mathbb Z _{2}$-extension $Y$ in $K _{\rm sep}$. In addition, it 
follows from the uniqueness of $Y$ and the normality of $K _{\rm 
tr}/K$ that $Y/K$ is a Galois extension. Note also that 
$\mathcal{G}(Y/K _{\rm tr}) \cong \mathbb Z _{2}$ and 
$\mathcal{G}(Y/K _{\rm tr})$ is a normal Sylow pro-$2$-subgroup of 
$\mathcal{G}(Y/K)$. These observations indicate that 
$\mathcal{G}(Y/K _{\rm tr})$ is included in the centre of 
$\mathcal{G}(Y/K)$. It is therefore clear from Galois theory and 
Burnside's theorem (cf. \cite{Ha}, Theorem~14.3.1, and \cite{S1}, 
Ch. I, 5.9) that $\mathcal{G}(Y/K)$ possesses a closed normal 
subgroup $N$, such that $\mathcal{G}(Y/K _{\rm tr})N = 
\mathcal{G}(Y/K)$ and $\mathcal{G}(Y/K _{\rm tr}) \cap N = \{1\}$.
This means that $\mathcal{G}(Y/K) \cong \mathcal{G}(Y/K _{\rm tr})
\times N$, the fixed field $\Gamma $ of $N$ is a $\mathbb Z
_{2}$-extension of $K$, $\Gamma K _{\rm tr} = Y$ and $\Gamma \cap K
_{\rm tr} = K$. As $Y/K _{\rm tr}$ is immediate and finite 
extensions of $K$ in $K _{\rm tr}$ are of odd degrees, one deduces 
from (2.3) and Proposition \ref{prop1.1} that $\Gamma /K$ is 
immediate and norm-inertial. Hence, by \cite{Ch3}, Theorem~1.1, 
Br$(K) _{2} \cong \mathbb Z(2 ^{\infty })$.
\end{proof}

\par
\medskip
Theorem \ref{theo1.2} and Proposition \ref{prop5.1} can be
complemented as follows:
\par
\medskip
\begin{prop}
\label{prop5.2}
Let $(\Phi , \omega )$ satisfy the conditions of Theorem
\ref{theo1.2}, for some $q > 2$, and let $T _{q}$ be a divisible
subgroup of $\mathbb Q/\mathbb Z$ with $T _{q} = \{0\}$. Then there
exists a valued extension $(K, v)$ of $(\Phi , \omega )$, such that
$v$ is Henselian, $v(K)$ is totally indivisible and Archimedean,
$\widehat K = \widehat \Phi _{\rm sep}$, $K/\Phi $ has transcendency
degree $1$, {\rm Br}$(K) \cong T$, and $K$ admits a defectful finite
extension in $K _{\rm sep}$.
\end{prop}
\par
\medskip
\begin{proof}
It is clearly sufficient to consider only the special case where
char$(\Phi ) = q$ or $\Phi $ contains a primitive $q$-th root of
unity. Our argument goes along the same lines as the proof of
Theorem \ref{theo1.2}, so we omit the details and note only its main
steps. Our starting point are the following statements:
\par
\medskip
(5.1) For any integer $m \ge 2$ dividing $q - 1$, $\Phi _{\rm sep}$
contains as a subfield a totally ramified Galois extension $\Psi
_{m}$ of $\Phi $, such that $[\Psi _{m}\colon \Phi ] = qm$ and
$\mathcal{G}(\Psi _{m}/\Phi )$ is a nonabelian metacyclic group. For
example, if char$(\Phi ) = q$ and $\pi $ is a generator of $M
_{\omega }(\Phi )$, then one may take as $\Psi _{m}$ the field $\Phi
(\xi _{m})$, where $\xi _{m} \in \Phi _{\rm sep}$ is a root of the
polynomial $g _{m}(X) = (X ^{q} - X) ^{m} - \pi ^{-1}$. When 
char$(\Phi ) = 0$, $\Psi _{m}$ can be chosen among the subfields of 
the root field $\widetilde \Psi _{m} \in I(\overline \Phi /\Phi )$ 
of the polynomial $h _{m}(X) = (X ^{q} - 1) ^{m} - \pi $, under the 
same hypothesis on $\pi $.
\par
\medskip\noindent
Fix $m$ as in (5.1), put $\Psi _{m} ^{\prime } = \Psi _{m}K _{\rm
tr}$ and let $\widetilde K \in I(\overline \Phi /\Phi _{\rm ur})$ be
maximal with respect to the property that $\widetilde K \cap \Psi
_{m} ^{\prime } = \Phi _{\rm ur}$. Observing that $\mathcal{G}(\Psi
_{m} ^{\prime }/\Phi _{\rm ur})$ is a pro-supersolvable group and
$\mathcal{G}(\Psi _{m} ^{\prime }\widetilde K/\widetilde K) \cong
\mathcal{G}(\Psi _{m}/\Phi _{\rm ur})$, and applying Galois theory
and Huppert's theorem (cf. \cite{Ha}, Theorem~10.5.8), one obtains
that $\mathcal{G}_{\widetilde K}$ is pro-supersolvable. Therefore, by \cite{Ha}, Theorem~10.5.1, $\mathcal{G}(L/\widetilde K)$ is supersolvable, for each finite Galois extension $L/\widetilde K$. 
Hence, for any $p \in \mathbb P$, $L$ possesses a subfield $L _{[p]}$ that is a Galois extension of $\widetilde K$ with $\mathcal{G}(L _{[p]}/\widetilde K)$ isomorphic to a Hall $\Pi $-subgroup of $\mathcal{G}(L/\widetilde K)$, where $\Pi = \{\pi \in \mathbb P\colon 
\ \pi \le p\}$ (cf. \cite{Ha}, Sect. 9.3 and Corollary~10.5.2). Let 
$H _{p}$ be a Sylow $p$-subgroup of $\mathcal{G}(L _{[p]}/\widetilde 
K)$. We show that $H _{p}$ is cyclic. The group $\mathcal{G}(L _{[p]}/\widetilde K)$ is supersolvable which implies that it includes 
$H _{p}$ as a normal subgroup. The Frattini subgroup $\Phi (H _{p})$ 
of $H _{p}$ is characteristic in $H _{p}$, so it is normal in $\mathcal{G}(L _{[p]}/\widetilde K)$, and by Galois theory, the fixed field, say $\Lambda _{p}$, of $\Phi (H _{p})$ is a Galois extension 
of $\widetilde K$. Let $\overline H _{p}$ be a Sylow $p$-subgroup of $\mathcal{G}(\Lambda _{p}/\widetilde K)$. Then $\mathcal{G}(\Lambda _{p}/\widetilde K) \cong \mathcal{G}(L _{[p]}/\widetilde K)/\Phi (H _{p})$, $\overline H _{p} \cong H _{p}/\Phi (H _{p})$ and $\overline 
H _{p}$ is an abelian normal subgroup of $\mathcal{G}(\Lambda _{p}/\widetilde K)$ of period $p$. Also, $\mathcal{G}(\Lambda _{p}/\widetilde K)$ is supersolvable, and by \cite{Ha}, 
Corollary~10.5.2, it has a normal subgroup of order $p$, the greatest prime divisor of $[\Lambda _{p}\colon \widetilde K]$. Regarding $\overline H _{p}$ as an $\mathbb F _{p}$-vector space, and 
considering the action on $\overline H _{p}$ by conjugation of some 
Hall $\Pi _{p}$-subgroup of $\mathcal{G}(\Lambda _{p}/\widetilde K)$, 
for $\Pi _{p} = \Pi \setminus \{p\}$, one obtains from Maschke's 
theorem that if $\overline H _{p}$ is noncyclic, then it decomposes 
into the direct product of normal subgroups of $\mathcal{G}(\Lambda 
_{p}/\widetilde K)$ of order $p$. In view of Galois theory, this 
leads to the conclusion that if $\overline H _{p}$ is noncyclic, 
then there exist degree $p$ extensions $\Lambda _{1}$ and $\Lambda 
_{1} ^{\prime }$ of $\widetilde K$ in $\Lambda _{p}$, such that 
$[\Lambda _{1}\Lambda _{1} ^{\prime }\colon \widetilde K] = p ^{2}$. 
Therefore, $\Lambda _{1}$ and $\Lambda _{1} ^{\prime }$ are not 
$\widetilde K$-isomorphic. Our conclusion, however, contradicts the 
maximum condition on $\widetilde K$ and so proves that $\overline H 
_{p}$ is cyclic. It is now easy to see that $H _{p}$ has a unique 
maximal subgroup, whence it is cyclic as well. Summing-up the 
obtained results, one proves the following:
\par
\medskip
(5.2) $\widetilde K$ is perfect, $\widetilde K _{\rm tr} = \Phi
_{\rm tr}\widetilde K$ and $\mathcal{G}(\widetilde K _{\rm
tr}/\widetilde K) \cong \mathcal{G}(\Phi _{\rm tr}/\Phi _{\rm ur})$;
$\overline \Phi /\widetilde K _{\rm tr}$ is a quasiinertial $\mathbb
Z _{q}$-extension and the Sylow pro-$p$-subgroups of
$\mathcal{G}_{\widetilde K}$ are isomorphic to $\mathbb Z _{p}$, for
each $p \in \mathbb P$; the Sylow pro-$q$-subgroup of
$\mathcal{G}_{\widetilde K}$ is normal and equals the closure of the
commutator subgroup of $\mathcal{G}_{\widehat K}$.
\par
\medskip\noindent
As in the proof of Theorem \ref{theo1.2}, let $\widetilde K(X)$ be
the rational function field in an indeterminate $X$ with coefficients
in $\widetilde K$, and $\tilde v _{\gamma }$ the valuation of 
$\widetilde K(X)$ extending $\omega $ so that $\tilde v _{\gamma }(X) 
= \gamma $, where $\gamma $ is a given element of $\mathbb R 
\setminus \mathbb Q$. Fix a Henselization $(K _{0}, v _{0})$ of $(\widetilde K(X), \tilde v _{\gamma })$, put $N(T) = \{p \in \mathbb 
P \setminus \{q\}\colon \ T _{p} = \{0\}\}$, and let $K$ be an 
extension of $K _{0}$ in $K _{0,{\rm sep}}$ maximal with respect to 
the property that $K \cap \overline \Phi = \widetilde K$ and finite extensions of $K _{0}$ in $K$ are cyclic and totally ramified of 
degrees not divisible by any $p ^{\prime } \in \mathbb P \setminus 
N(T)$. Note that $(K, v)$ has the properties required by Proposition \ref{prop5.2}, where $v$ is a prolongation of $v _{0}$ on $K$. It 
follows from the definition of $(K, v)$ that $\widehat K = \widehat 
\Phi _{\rm sep}$, and for each $p \in N(T)$, $v(K)/pv(K)$ is of order 
$p$ and $K(p)/K$ is a $\mathbb Z _{p}$-extension; also, $v(K)/pv(K)$ 
has order $p ^{2}$ and $r _{p}(K) = 2$ in case $p \in \mathbb P 
\setminus N(T)$ and $p \neq q$. Since $K$ contains a primitive $m$-th root of unity, for any $m \in \mathbb N$ not divisible by $q$, these observations show that Br$(K) _{p} = \{0\}$, $p \in \mathbb N(T)$, 
and Br$(K) _{p} \cong \mathbb Z(p ^{\infty })$, $p \in \mathbb P 
\setminus N(T)$, $p \neq q$. The assertion that $K$ is quasilocal and satisfies with $v$ the conditions of Proposition \ref{prop1.1} is 
proved similarly to Theorem \ref{theo1.2}, so what remains to be seen 
is that Br$(K) _{q} = \{0\}$. Let $\Theta _{m}$ be the extension of 
$\Phi $ in $\Psi _{m}$ of degree $m$. Then $\Theta _{m}/\Phi $ and $\Theta _{m}K/K$ are cyclic extensions of degree $m$, and $\Theta 
_{m}K$ has an immediate $\mathbb Z _{q}$-extension $\Gamma $ in $K 
_{\rm sep}$ that is a Galois extension of $K$. This implies 
Br$(\Theta _{m}K) _{q} \cong \mathbb Z(q ^{\infty })$. Using (5.1) 
and regarding $_{q}{\rm Br}(\Theta _{m}K)$ as a module over the group algebra $\mathbb F _{q}[\mathcal{G}(\Theta _{m}K/K)]$, one obtains 
that if $\tau _{m}$ is a generator of $\mathcal{G}(\Theta _{m}K/K)$, 
then $\tau _{m}b = fb$, $b \in $ $_{q}{\rm Br}(\Theta _{m}K)$, for 
some $f \in \mathbb F _{q} ^{\ast }$, $f \neq 1$. As $m \mid q - 1$ 
and $m > 1$, this observation shows that $_{q}{\rm Br}(\Theta _{m}K)$ 
is included in the kernel of the corestriction homomorphism 
Br$(\Theta _{m}K) \to {\rm Br}(K)$, which enables one to deduce from 
the basic restriction-corestriction formula for Brauer groups (cf. \cite{Ti}) that Br$(K) _{q} = \{0\}$. Proposition \ref{prop5.2} is proved.
\end{proof}
\par
\medskip
\begin{rema}
\label{rema5.3}
Suppose that $(\widetilde K, \tilde v)$ and $\widetilde K(X)$ are 
defined as in the proof of Theorem \ref{theo1.2}, and $v _{0}$ is a 
restricted Gauss valuation of $\widetilde K(X)$ extending $\tilde v$ 
(see \cite{E3}, Example~4.3.2). Then, by \cite{Ch3}, 
Proposition~6.5, there exists a quasilocal Henselian field $(K, v)$,
such that $K \in I(\widetilde K(X) _{\rm sep}/\widetilde K)$, $v$ is
a prolongation of $v _{0}$, $\widehat K = \widehat K _{\rm sep} \neq
\widehat K ^{q}$, $K _{\rm sep} = K(q)$ and Br$(K)$ is a divisible 
hull of the (infinite) quotient group $\widehat K ^{\ast }/\widehat 
K ^{\ast q}$. As demonstrated at the end of \cite{Ch3}, Sect. 6, for 
any global field $\Psi $ of characteristic zero or $q$, this enables 
one to find (by the method of proving Theorem \ref{theo1.2}) field extensions $K _{t}/\Psi $, $t \in \mathbb N$, such that $K _{t}$ is quasilocal, $\mathcal{G}_{K _{t}}$ is a pro-$q$-group, the 
transcendency degree of $K _{t}/\Psi $ is equal to $t$, the class 
$d(K _{t}) \setminus \{K _{t}\}$ consists of division algebras of 
infinite genus, in the sense of \cite{CheRapRap}, and $[K _{t}\colon 
K _{t} ^{q}] = q$ in case char$(\Psi ) = q$. More precisely, by 
\cite{Ch2}, I, Corollaries~8.5 and 8.6, the genus of any $D _{t} \in 
d(K _{t})$ equals the set $\{[D _{t} ^{\prime }] \in {\rm Br}(K 
_{t})\colon \ D _{t} ^{\prime } \in d(K _{t}), {\rm ind}(D 
_{t} ^{\prime }) = {\rm ind}(D _{t})\}$, and also, the equivalence 
class of $D _{t}$, in the sense of \cite{KrMcKinn}, Definition~2.1. 
When char$(\Psi ) = 0$, these results ensure that $\mathcal{G}_{K 
_{t}}$ is a pro-$q$-group of Demushkin type with an infinite 
(continuous) cohomology group $H ^{2}(\mathcal{G}_{K _{t}}, \mathbb 
F _{q})$ (see \cite{Ch2}, I, Lemma~3.8, and \cite{Ch4}, 
Proposition~5.1).
\end{rema}
\par
\vskip0.4truecm
{\bf Acknowledgements.} In 2010 this research was partially supported
by the Institute of Mathematics of the Romanian Academy.

\medskip

\par\medskip
Institute of Mathematics and Informatics, Bulgarian Academy of Sciences
\par
Acad. G. Bonchev Str., bl. 8, 1113 Sofia, Bulgaria

\begin{thebibliography}{aa}

\bibitem{CheRapRap} V.I. Chernousov, A.S. Rapinchuk, I.A. Rapinchuk,
\emph{The genus of a division algebra and the unramified Brauer
group}, Bull. Math. Sci. {\bf 3} (2013), No. 2, 211-240.

\bibitem{Ch1} I.D. Chipchakov, \emph{Henselian valued quasi-local
fields with totally indivisible value groups}, Comm. Algebra {\bf
27} (1999), 3093-3108.

\bibitem{Ch2} I.D. Chipchakov, \emph{On the residue fields of
Henselian valued stable fields}, I, J. Algebra {\bf 319} (2008),
16-49; II, C.R. Acad. Bulg. Sci. {\bf 60} (2007), 471-478.

\bibitem{Ch3} I.D. Chipchakov, \emph{On Henselian valuations and
Brauer groups of primarily quasilocal fields}, In: Proc. Int. Conf.
and Humboldt Kolleg dedicated to Serb. Basarab; An. St. Univ.
Ovidius, Constanta, {\bf XIX} (2011), f. 2, 55-88.

\bibitem{Ch4} I.D. Chipchakov, \emph{Primarily quasilocal fields and
$1$-dimensional abstract local class field theory}, Preprint,
arXiv:math/0506515v7 [math.RA].

\bibitem{Ch5} I.D. Chipchakov, \emph{On the Brauer groups of
quasilocal fields and the norm groups of their finite Galois
extensions}, Preprint, arXiv:math/0707.4245v6 [math.RA].

\bibitem{Ch6} I.D. Chipchakov, \emph{Demushkin groups and inverse
Galois theory for pro-$p$-groups of finite rank and maximal
$p$-extensions}, Preprint, arXiv:1103.2114v3 [math.RA].

\bibitem{Dr1} P.K. Draxl, \emph{Skew Fields}, London Math. Soc.
Lecture Note Series, vol. 81, Cambridge etc., Cambridge Univ. Press,
IX, 1983.

\bibitem{Dr2} P.K. Draxl, \emph{Ostrowski's theorem for Henselian
valued skew fields,} J. Reine Angew. Math. {\bf 354} (1984),
213-218.

\bibitem{E1} I. Efrat, \emph{Finitely generated pro-$p$ Galois
groups of $p$-Henselian fields}, J. Pure Appl. Algebra {\bf 138}
(1999), 215-228.

\bibitem{E2} I. Efrat, \emph{Demuskin fields with valuations}, Math.
Z. {\bf 243} (2003), 333-353.

\bibitem{E3} I. Efrat, \emph{Valuations, Orderings, and Milnor
$K$-Theory,} Math. Surveys and Monographs 124, Amer. Math. Soc.,
Providence, RI, 2006.

\bibitem{Ha} M. Hall, \emph{The Theory of Groups}, Macmillan Company,
New York, 1959.

\bibitem{JW} B. Jacob, A.R. Wadsworth, \emph{Division algebras
over Henselian fields}, J. Algebra {\bf 128} (1990), 126-179.

\bibitem{J} N. Jacobson, \emph{Finite-Dimensional Division Algebras
over Fields}, Springer-Verlag, Berlin, 1996.

\bibitem{KrMcKinn} D. Krashen, K. McKinnie, \emph{Distinguishing
division algebras by finite splitting fields}, Manuscr. Math. {\bf
134} (2011), No. 1-2, 171-182.

\bibitem{La} T.Y. Lam, \emph{Orderings, valuations and quadratic
forms}, Reg. Conf. Ser. Math. 52, 1983.

\bibitem{LvdD} A. Lubotzky, L. van den Dries, \emph{Subgroups of
free profinite groups and large subfields of $\mathbb Q $}, Isr. J.
Math. {\bf 39} (1981), 25-45.

\bibitem{L} S. Lang, \emph{Algebra}, Reading, Mass., Addison-Wesley,
Inc., XVIII, 1965.

\bibitem{MS} A.S. Merkur'ev, A.A. Suslin, \emph{$K$-cohomology of
Brauer-Severi varieties and norm residue homomorphisms}, Izv. Akad.
Nauk SSSR {\bf 46} (1982), 1011-1046  (Russian: English transl. in
Math. USSR Izv. {\bf 21} (1983), 307-340).

\bibitem{P} R. Pierce, \emph{Associative Algebras}, Graduate Texts
in Math., 88, Springer-Verlag, New York-Heidelberg-Berlin, 1982.

\bibitem{Po} F. Pop, \emph{Galoissche Kennzeichnung $p$-adisch
abgeschlossener K\"orper}, J. Reine Angew. Math. {\bf 392} (1988),
145-175.

\bibitem{PR} A. Prestel, P. Roquette, \emph{Formally $p$-adic
Fields}, Lecture Notes in Math., 1050, Berlin etc., Springer-Verlag,
1984.

\bibitem{S1} J.-P. Serre, \emph{Cohomologie Galoisienne}, Transl.
from French by Patrick Ion, Springer, Berlin, 1997.

\bibitem{S2} J.-P. Serre, \emph{Local Fields}, Transl. from French
by M.J. Greenberg. Graduate Texts in Math., 67, Springer-Verlag, New
York-Heidelberg-Berlin, 1979.

\bibitem{T} J. Tate, \emph{Relations between $K _{2}$ and Galois
cohomology}, Invent. Math. {\bf 36} (1976), 257-274.

\bibitem{Ti} J.-P. Tignol, \emph{On the corestriction of central
simple algebras}, Math. Z. {\bf 194} (1987), 267-274.

\bibitem{TY} I.L. Tomchin, V.I. Yanchevskij, \emph{On defects of
valued division algebras}, Algebra i Analiz {\bf 3} (1991), 147-164
(Russian: English transl. in St. Petersburg Math. J. {\bf 3} (1992),
631-647).

\bibitem{Wad} A.R. Wadsworth, \emph{Valuation Theory on finite
dimensional division algebras}, In: F.-V. Kuhlmann (ed.) et al.,
Valuation Theory and its Applications, Proc. Int. Conf. and
Workshop, Univ. Saskatchewan, Saskatoon, Canada, 28.7-11.8, 1999.

\bibitem{W} R. Ware, \emph{Galois groups of maximal
$p$-extensions}, Trans. Amer. Math. Soc. {\bf 333} (1992), 721-728.

\bibitem{Wh} G. Whaples, \emph{Algebraic extensions of arbitrary
fields}, Duke Math. J. {\bf 24} (1957), 201-204.

\end{thebibliography}
\end{document}